\numberwithin{equation}{section} \hyphenation{semi-stable}
\definecolor{ffzzqq}{rgb}{1,0.6,0}
\definecolor{qqqqff}{rgb}{0,0,1}
\definecolor{ffqqqq}{rgb}{1,0,0}
\definecolor{wwzzqq}{rgb}{0.4,0.6,0}
\definecolor{zzwwff}{rgb}{0.6,0.4,1}
\pgfplotsset{compat=1.15}
\newcommand{\CC}{\mathbb{C}}
\newcommand{\mA}{\mathbb{A}}
\newcommand{\NN}{\mathbb{N}}
\newcommand{\ZZ}{\mathbb{Z}} 
\newcommand {\PP}{\mathbb{P}}
\newcommand {\sO}{\mathcal{O}}
\newcommand{\depth}{\text {depth}}
\DeclareMathOperator{\rank}{rank}
\DeclareMathOperator{\Proj}{Proj}
 \def\cocoa{{\hbox{\rm C\kern-.13em
 o\kern-.07em C\kern-.13em o\kern-.15em A}}}
\newtheorem{theorem}{Theorem}[section]
\newtheorem{lemma}[theorem]{Lemma}
\newtheorem{proposition}[theorem]{Proposition}
\newtheorem{corollary}[theorem]{Corollary}
 \theoremstyle{definition}
\newtheorem{definition}[theorem]{Definition} \theoremstyle{remark}
\newtheorem{remark}[theorem]{Remark}
\newtheorem{example}[theorem]{Example}
\definecolor{MyDarkGreen}{cmyk}{0.7,0,1,0}
\begin{document}

\title [Quasi-homogeneous singularities of hypersurfaces and syzygies]
{Quasi-homogeneous singularities of projective hypersurfaces and Jacobian syzygies}
 \author[A. V. Andrade]{Aline V. Andrade}
 \address{ICEx - UFMG, Department of Mathematics, Av. Ant\^onio Carlos, 6627, 30123-970 Belo Horizonte, MG, Brazil}
 \email{andradealine@mat.ufmg.br, ORCID 0000-0001-7129-3953}
 \author[V. Beorchia]{Valentina Beorchia} 
 \address{Dipartimento di Matematica, Informatica e Geoscienze, Universit\`a di
Trieste, Via Valerio 12/1, 34127 Trieste, Italy}
 \email{beorchia@units.it, 
 ORCID 0000-0003-3681-9045}.
 \author[A. Dimca]{Alexandru Dimca}
\address{Universit\'e C\^ ote d'Azur, CNRS, LJAD, France and Simion Stoilow Institute of Mathematics,
P.O. Box 1-764, RO-014700 Bucharest, Romania}
\email{dimca@unice.fr, ORCID 0000-0001-9679-9870}

 \author[R.\ M.\ Mir\'o-Roig]{Rosa M.\ Mir\'o-Roig} 
 \address{Facultat de
 Matem\`atiques i Inform\`atica, Universitat de Barcelona, Gran Via des les
 Corts Catalanes 585, 08007 Barcelona, Spain} \email{miro@ub.edu, ORCID 0000-0003-1375-6547}

\thanks{The 1st and 2nd authors are supported by ICTP-INdAM Collaborative Grants and Research in Pairs Programme.
The 1st author is supported by CAPES/Print grant 88887.913035/2023-00 and CNPq universal grant 408974/2023-0. The 2nd author is a member of GNSAGA of INdAM, is supported by MUR funds: PRIN project 2022BTA242 {\it Geometry of algebraic structures: moduli, invariants, deformations}, and by the University of Trieste project FRA 2025. The 3rd author was supported by the project {\it Singularities and Applications} - CF 132/31.07.2023 funded by the EU - NextGenerationEU - through Romania's National Recovery and Resilience Plan. The fourth author was supported by the grant PID2020-113674GB-I00}

\thanks{{\bf 2020 MSC} Primary 14H50, 14J10; Secondary 14B05, 13C14, 13D02, 32S25, 32S05 15A69.}

\medskip \noindent
\thanks{{\bf Keywords}. Hypersurface singularities, Milnor and Tjurina number, quasi-homogeneous singularities, Jacobian syzygies}

\begin{abstract} We prove an unexpected general relation between the Jacobian syzygies of a projective hypersurface $V\subset \PP^n$ with only isolated singularities and the nature of its singularities.
This allows to establish a new method for the identification of quasi-homogeneous hypersurface isolated singularities.
The result gives an insight on how the geometry is reflected in the Jacobian syzygies and extends previous results of the first, second and last author for free and nearly free plane curves \cite{ABMR}.
\end{abstract}

\maketitle

\section{Introduction}
In this paper, we address the study of a general relation between the Jacobian syzygies of a projective hypersurface $V\subset \PP^n$ with only isolated singularities and the nature of its singularities.
In particular, we are interested in finding geometric conditions 
for an isolated hypersurface singular point to be quasi-homogeneous, that is when it can
be locally represented by a weighted homogeneous polynomial, see Definition \ref{def: quasi-homogeneous}. The definition turns out to be difficult to test in concrete cases, especially for singular points of high multiplicity.

Here we establish a useful and efficient criterion which involves the global Jacobian singular scheme, that is the scheme defined by the Jacobian ideal $J_f$, generated by the partial derivatives
 $f$. Our criterion relies on the Jacobian syzygies and can be expressed in terms of a first syzygy matrix appearing in a minimal free resolution of $J_f$. The main result is the following:

\begin{theorem}
Let $p$ be a singular point of the hypersurface $V=V(f)$ and
assume that $V$ has only isolated singularities. Then $(V,p)$ is a quasi-homogeneous singularity if and only if there is a Jacobian syzygy $\rho=(A_0, \cdots, A_n)$ for $f$ and an integer $k \in \{0,\cdots,n\}$ such that $A_k(p) \ne 0$.

Equivalently, if $M_f$ is a first syzygy matrix in a minimal free resolution of the Jacobian ideal $J_f$ of $f$,
then a singular point $p \in {\rm Sing}\ C$ is quasi-homogeneous if and only if ${\rm rk}\ M_f(p) \ge 1$.
 \end{theorem}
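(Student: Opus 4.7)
The strategy is to translate the rank condition on $M_f(p)$ into a statement about minimal generators of the Jacobian ideal at $p$, identify this ideal locally with the Tjurina ideal of a defining equation of the germ, and then conclude via Saito's classical theorem on quasi-homogeneous isolated hypersurface singularities. After a linear change of coordinates on $\PP^n$ I may assume $p = (1:0:\cdots:0)$, so that $p$ lies in the affine chart $x_0 \ne 0$. Setting $y_i = x_i/x_0$ and $g(y) = f(1, y)$, the germ $(V,p)$ is defined by $g = 0$ in the local ring $A := \mathcal{O}_{\PP^n, p}$. The Euler identity $d f = \sum_i x_i f_i$ dehomogenises to $f_0(1, y) = dg - \sum_{i=1}^n y_i g_i$, where $g_i = \partial g/\partial y_i$, so that
\[ J_f \cdot A \;=\; (g, g_1, \ldots, g_n) \;=\; T(g), \]
the Tjurina ideal of $g$ at $p$.

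Since localisation is exact, the columns of $M_f$ generate the syzygy module of $(f_0, \ldots, f_n)$ over $A$. By Nakayama's lemma, ${\rm rk}\,M_f(p) \ge 1$ is equivalent to the generators $f_0|_{x_0=1}, \ldots, f_n|_{x_0=1}$ of $T(g)$ being non-minimal, that is, $\mu(T(g)) \le n$; this is in turn equivalent to the existence of a Jacobian syzygy $(A_0, \ldots, A_n)$ with $A_k(p) \ne 0$ for some $k$, so the two formulations in the theorem are interchangeable. The remaining task is the key equivalence $\mu(T(g)) \le n \iff (V,p)$ is quasi-homogeneous. The implication $(\Leftarrow)$ is immediate from Saito's theorem: quasi-homogeneity yields $g \in J(g) := (g_1, \ldots, g_n)$, whence $T(g) = J(g)$ is generated by $n$ elements. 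For $(\Rightarrow)$, note that $\mu(J(g)) = n$, since $J(g)$ is $\mathfrak{m}$-primary of height $n$ in the regular local ring $A$ and so $(g_1, \ldots, g_n)$ is a regular sequence. A non-trivial relation $\alpha_0 g + \sum_{i \ge 1} \alpha_i g_i = 0$ in $A$ with some $\alpha_j(p) \ne 0$ then splits into two sub-cases: if $\alpha_0(p) \ne 0$, then $\alpha_0$ is a unit in $A$ and $g = -\alpha_0^{-1}\sum \alpha_i g_i \in J(g)$, as desired.

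The main obstacle is the remaining sub-case, in which $\alpha_0(p) = 0$ but $\alpha_k(p) \ne 0$ for some $k \ge 1$. A brief manipulation of the relation forces $(\alpha_1(p), \ldots, \alpha_n(p))$ to lie in the kernel of the Hessian of $g$ at $p$, so this sub-case can arise only in the presence of a degenerate Hessian. I expect to rule it out by combining (i) the Gorenstein duality of the Milnor algebra $A/J(g)$ with the structure of the colon ideal $(J(g):g)$, and (ii) the splitting lemma, which in corank one reduces $g$ to an $A_k$-type singularity (necessarily quasi-homogeneous), higher corank cases being handled by the classical bound $\mu(T(g)) - n \in \{0, 1\}$ for isolated hypersurface singularities. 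Lastly, if a Jacobian syzygy satisfies $A_k(p) \ne 0$ only for an index $k$ with $p_k = 0$, a further linear change of coordinates $\tilde{x}_k = x_k + c\,x_l$ (for any $l$ with $p_l \ne 0$ and generic $c$) produces a new Jacobian syzygy with $\tilde{A}_k(\tilde{p}) \ne 0$ and $\tilde{p}_k \ne 0$, reducing to the previously analysed case.
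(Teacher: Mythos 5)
Your global-to-local reduction is sound, and in fact for the direction ``quasi-homogeneous $\Rightarrow$ ${\rm rk}\,M_f(p)\ge 1$'' it is a legitimately cleaner route than the paper's: you get the non-vanishing syzygy by localising the minimal free resolution at $p$ and applying Nakayama to the $n+1$ generators of $T(g)=(g,g_1,\dots,g_n)=J_f\mathcal{O}_p$, whereas the paper multiplies $\partial_0 f$ by an auxiliary form $h$ and uses saturation of the complete intersection $(\partial_1 f,\dots,\partial_n f)$. (Do note, as the paper does, the passage between Saito's statement in $\CC\{y\}$ and the algebraic local ring via faithful flatness of completion.) The problem is the converse. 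You correctly dispose of the sub-case where the relation has a unit coefficient on $g$ (Saito again), but the sub-case $\alpha_0(p)=0$, $\alpha_k(p)\ne 0$ for some $k\ge 1$ --- which you yourself flag as ``the main obstacle'' --- is not proved: you only state a plan you ``expect'' to carry out, and that plan does not close the gap. The bound $\mu(T(g))-n\in\{0,1\}$ is automatic (an $\mathfrak{m}$-primary ideal of height $n$ in an $n$-dimensional regular local ring, generated by the $n+1$ elements $g,g_1,\dots,g_n$) and carries no information about quasi-homogeneity; invoking it for corank $\ge 2$ is circular, since the entire content of the hard direction is precisely that $\mu(T(g))=n$ forces $g\in J(g)$. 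The splitting lemma covers only corank $\le 1$, where quasi-homogeneity ($A_k$ singularities) holds unconditionally and the hypothesis is not even needed, and the remark that $(\alpha_1(p),\dots,\alpha_n(p))$ lies in the kernel of the Hessian is no constraint at a singular point of corank $\ge 2$; the Gorenstein-duality/colon-ideal step is not developed at all.

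What is missing is the paper's key trick for exactly this sub-case: after arranging $p=(1:0:\cdots:0)$, perform the linear change of coordinates $x_0=z_0+z_k$, $x_j=z_j$ for $j\ge 1$, which fixes $p$. For $F(z)=f(z_0+z_k,z_1,\dots,z_n)$ one has $F_0=\partial_0 f$ and $F_k=\partial_0 f+\partial_k f$ (evaluated at the shifted coordinates), so the given syzygy $\sum_j a_j\partial_j f=0$ becomes a Jacobian syzygy for $F$ whose zeroth coefficient is $c_0-c_k$, and $(c_0-c_k)(p)=a_0(p)-a_k(p)\ne 0$; this reduces to the first sub-case applied to the isomorphic germ $(V(F),p)$. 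Alternatively, one can close the sub-case purely locally: the dehomogenised relation yields the derivation $D=\sum_{i\ge1}(\alpha_i-\alpha_0 y_i)\partial_{y_i}$ with $D(g)\in(g)$ and $D(p)\ne 0$; rectifying $D$ shows the germ is analytically a product of a lower-dimensional hypersurface germ with a disc, which contradicts the isolatedness of the singularity at $p$, so this sub-case is in fact empty. Either argument must be supplied; as written, your proof of the implication ``${\rm rk}\,M_f(p)\ge1 \Rightarrow (V,p)$ quasi-homogeneous'' is incomplete.
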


The proof relies on the construction of a suitable homogeneous Jacobian syzygy starting from K. Saito's characterization of a quasi-homogeneous isolated singularity given in \cite{S}.
Besides the geometric relevance of our result, we note that our criterion is very simple to test. Indeed, a first syzygy matrix is easily found by many symbolic computational programs, like Macaulay 2 \cite{M2} or Singular \cite{Sing}. Moreover, we can check if all the singular points of a given curve are quasi-homogeneous just by checking if the zero set of the ideal generated by the entries of $M_f$ is empty, even without knowing the coordinates of the singular points. We include several examples on which we test our criterion. We point out, however, that from a computational point of view, another simple test on the presence of non-quasi-homogeneous singularities can be performed by using 
Proposition \ref{rk2}, which relies on a result of Brian\c con and Skoda, see \cite{BS}.

In the planar case, by following the lines of \cite{ABMR}, we give an alternative and geometric proof of the theorem. We consider the polar map
$\nabla f: \PP^2 \dasharrow \PP^2$, defined by the three partials of $f$, and the surface $S_f$ given by the closure of its graph in $\PP^2 \times \PP^2$, that is the blow-up of the Jacobian scheme. Such a surface is contained in a surface
$Z_f \subset \PP^2 \times \PP^2$, which is obtained from a first syzygy matrix arising in a minimal free resolution of $J_f$, see \eqref{eq: equations of Z_f}. We prove that $Z_f$ is arithmetically Cohen-Macaulay by an explicit construction of a minimal free resolution, see Proposition \ref{lemma: sections of E}. This allows us to compute the numerical class of $Z_f$, which only depends on the degree of $f$ and on the total Tjurina number, see Lemma \ref{lemma: class of Z_f}.

Our result follows from the comparison between the numerical class of $Z_f$ and the one of $S_f$. It turns out that $S_f=Z_f$ if and only if
all singularities are quasi-homogeneous.

Finally, we point out that some of our arguments can be rephrased in terms of the Rees and the Symmetric algebra of the Jacobian ideal $J_f$, which are the coordinate rings of $S_f$ and $Z_f$, respectively. The Cohen-Macaulayness of the Symmetric algebra 
of homogeneous ideals and the question whether it coincides with the Rees algebra have been investigated, in general, in several papers (see \cite{EH, Lin, SUV, TPDA}).
In particular, for the Jacobian ideal, a different proof of the Cohen-Macaulayness of $Z_f$ and a minimal free resolution are given in \cite[Theorem 3.2.6]{B-C}. Moreover, the equivalence between the irreducibility of $Z_f$ and the property of having only quasi-homogeneous singular points can also be proved by using the results of \cite{NN} and \cite{NS}.

Moreover, there is a parallelism between the notions of degree and Hilbert-Samuel multiplicity of a non-reduced zero dimensional connected component of the base scheme of a projective linear system, and the Tjurina and Milnor numbers of an isolated singularity, and being locally a complete intersection translates into a quasi-homogeneous singularity.
The study of the difference between the two numbers is considered in \cite{BCJ} by comparing the Rees algebra and the symmetric algebra of the ideal defining a generically finite rational map from $\PP^n \dasharrow \PP^{n+1}$. By \cite[Theorem 1]{H}, the polar map of a hypersurface $V(f)$ with only isolated singularities is generically finite, unless $V(f)$ is a cone with vertex a point. When the two degrees are different, the authors prove in \cite[Proposition 5]{BCJ} that the additional component of the symmetric algebra is a hyperplane of multiplicity equal to the difference of the two degrees. We point out, however, that Theorem \ref{thm1}
holds in all cases.

Now we outline the structure of the paper. In Section 2 we fix the notation, the definitions of Milnor and Tjurina numbers, of quasi-homogeneous singularities, and we exhibit some examples to illustrate these concepts. 
In Section 3 we prove the main Theorem and illustrate its usefulness on several examples.
In Section 4 we focus on the case of reduced planar curves: we determine the classes of $S_f$ and $Z_f$, we identify a minimal free resolution of $Z_f$ and we prove our main theorem (see Theorem \ref{main2}). In the last section, we conclude with some remarks concerning curves admitting some non-quasi-homogeneous singular point; the residual scheme to $S_f$ can be non-reduced, and it would be interesting to bound its degree in terms of the degrees of the Jacobian syzygies. Furthermore, by considering the so-called {\it Koszul hull} of $S_f$ introduced in \cite{B-C} (see \eqref{eq: Koszul hull}),
one can also define a zero-dimensional scheme of degree $\mu(C)$ in $\PP^2$, which contains properly the Jacobian scheme, and whose properties could give a better insight into the nature of non-quasi-homogeneous singularities.

\medskip \noindent  {\bf Acknowledgement.} 
The first and second authors thank ICTP (Trieste) for the warm hospitality, where most of the research was performed.

\section{Notation and background}
 This section contains the basic definitions and results on Jacobian ideals associated with
reduced singular projective hypersurfaces and it lays the groundwork for the results in the later sections.

From now on, we fix the polynomial ring $R = \CC[x_0, x_1, \cdots ,x_n]$ and we denote by $V = V (f )$ a
hypersurface of degree $d$ in the complex projective space $\PP^n = \Proj(R)$ defined by a
homogeneous polynomial $f \in R_d$.

\subsection{Jacobian ideal of a hypersurface}

Given a hypersurface $V =V(f)$ of degree $d$, we define the {\em Jacobian ideal} $J_f$ of $V$ as the homogeneous ideal in $R$ generated by the $n+1$ partial derivatives
$\partial_0 f:=\frac{\partial f}{\partial x_0}$, $\cdots $, $\partial_n f:=\frac{\partial f}{\partial x_n}$, and the {\em Jacobian scheme} $\Sigma_f$ of $V=V(f) \subseteq \PP^n$ is the zero-dimensional scheme with homogeneous ideal $J_f$.

We denote by ${\rm Syz}(J_f)$ the graded $R$-module of all Jacobian relations for $f$, that is
\begin{equation}\label{eq: syzygy module}
{\rm Syz}(J_f):=\{(A_0,\cdots ,A_n)\in R ^{n+1}\mid A_0 \partial_0 f +A_1 \partial_1 f+ \cdots +A_n \partial_n f = 0 \}.
\end{equation} 

To any syzygy $\rho$, one can associate the following derivation
 \begin{equation}
\label{eq11}
D_{\rho}=A_0\frac{\partial}{\partial x_0}+\cdots + A_n\frac{\partial}{\partial x_n} \in Der(R),
\end{equation}
on the polynomial ring $R$, which satisfies $D_{\rho}(f)=0$, that is the derivation $D_{\rho}$ kills the polynomial $f$.

The notation ${\rm Syz}(J_f)_t$ will indicate the homogeneous part of degree $t$ of the graded $R$-module ${\rm Syz}(J_f)$; if $V(f)$ has only isolated singularities,
we have that for any $t \ge 0$, the $\CC$-vector space ${\rm Syz}(J_f)_t$ has finite dimension.
The minimal degree of a Jacobian syzygy for $f$ is the integer $d_1$ defined to be the smallest integer $t$ such that there
is a nontrivial relation
$ a_0 \partial_0 f + \cdots +a_n \partial_n f = 0$
with coefficients $a_i \in R_t$. More precisely, we have:
\[
d_1=\min\{ t\in \NN \mid {\rm Syz}(J_f)_t\ne 0\}.
\]
It is well known that $d_1 = 0$, that is the $n+1$ partials
$\partial_0 f$, $\partial_1 f$, $\cdots $, $\partial_n f$ are linearly dependent, if and only if $V$ is
a cone with vertex of point $p\in \PP^n$ of multiplicity $d$. In this last case, the point $p$ is the only singular point of $V$ and it is easily seen to be a quasi-homogeneous singular point, since the Jacobian scheme is a complete intersection.
Therefore, we will always assume that $d_1>0$.

\begin{definition} Let $V =V(f)\subset \PP^n$ be a hypersurface of degree $d$ with isolated singularities.
We say that $V$ is a {\em $m$-syzygy hypersurface} if ${\rm Syz}(J_f)$ is minimally generated by $m$ homogeneous syzygies, say $\rho_1,\rho_2, \cdots, \rho_m$ of degrees $d_i= \deg \rho_i$ ordered so that $d_1\leq d_2\leq \cdots \leq d_m $. These degrees are called the {\em exponents} of the hypersurface $V$ and $\rho_1,\rho_2, \cdots, \rho_m$ a {\em minimal set of generators} for ${\rm Syz}(J_f)$.

When $n=m=2$, the $R$-module ${\rm Syz}(J_f)$ is free of rank $2$, and the curve $C=V(f)\subset \PP^2$ is called {\em free}, see \cite{D, DS2, DS4}. 

In general, when $n=m$, the hypersurface $V(f) \subset \PP^n$ is called {\em free}, and its singular locus has pure codimension $1$, see \cite {DS7}; in particular, if $n\ge 3$, isolated singularities never occur on free hypersurfaces.

\vskip 4mm
Let us conclude this section by recalling some basic notions from singularity theory.
We fix $V=V(f)\subset \mA^n$ a reduced, not necessarily irreducible, hypersurface and we fix an isolated singular point $p\in V$. 
Let $\CC \{y_1, \cdots , y_n \}$ denote the ring of convergent power series.

\begin{definition}\label{def: Milnor and Tjurina}
Let $V=V(f)\subset \mA^n$ be a reduced hypersurface and $p=(0,\cdots ,0)\in V$ an isolated singularity. We define the {\em Milnor number} of $V$ in $p$ as
\begin{equation}\label{eq: local Milnor}
\mu _{(0,\cdots , 0)}(V) = \dim \CC \{y_1,\cdots ,y_n \}/ \langle \partial_{y_1} f,\cdots ,\partial_{y_n} f \rangle.
\end{equation}

 We define the {\em Tjurina number} of $V$ in $p$ as
\begin{equation}\label{eq: local Tjurina}
\tau _{(0,\cdots , 0)}(V) = \dim \CC \{y_1,\cdots ,y_n \}/ \langle \partial_{y_1} f,\cdots ,\partial_{y_n} f ,f\rangle.
\end{equation}

We clearly have $\tau_{(0,\cdots ,0)}(V)\le \mu _{(0,\cdots ,0)}(V)$.
To define $\mu_p(V)$ and $\tau_p(V)$ for an arbitrary point $p$, translate $p$ to the origin.
\end{definition}

\begin{definition}\label{def: quasi-homogeneous}
A singularity is {\it quasi-homogeneous} if and only if there exists a holomorphic change of variables so that the defining equation becomes weighted homogeneous. Recall that $f(y_1,\cdots ,y_n) =\sum c_{i_1\cdots i_n}y_1^{i_1}\cdots y_n^{i_n}$ is said to be {\em weighted homogeneous} if there exist rational numbers $\alpha_1,\cdots ,\alpha_n$ such that $\sum c_{i_1\cdots i_n}y_1^{i_1\alpha_1}\cdots y_n^{i_n\alpha _n}$ is homogeneous.
\end{definition}

\vskip 2mm
By Saito's results on weighted homogeneous polynomials, see \cite{S}, it follows that a hypersurface $V$ with an isolated singular point $p$ has a quasi-homogeneous singularity in $p$ if and only if $\mu_p(V)=\tau_p(V)$.

Given an affine or projective hypersurface $V$, we define
$$
\mu (V):= \sum _{p\in Sing(V)} \mu _p(V)
$$
to be the {\em total Milnor number}.

For a hypersurface $V$ with only isolated singularities, it holds:
$$
\deg J_f = \tau (V):= \sum _{p\in Sing(C)} \tau _p(V)
$$
where $J_f$ is the Jacobian ideal. We call $\tau (V)$ the {\em total Tjurina number} of $V$.

Examples of quasi-homogeneous and non-quasi-homogeneous singularities are given by the following.

\begin{example}\label{ex_QH}
\begin{enumerate}
 \item \label{ex_4syz_QH} The nodal cubic $C\subset \PP^2$ of equation $C=V(f)=V(x_1^2x_2-x_0^2(x_0+x_2))$ 
 has a simple node in $p=(0:0:1)$. The Jacobian ideal is
 \[
 J_f=(-3x_0^2-2x_0x_2,2x_1x_2,-x_0^2+x_1^2).\]

We have $\mu(C)=\mu_p(C)=1$ and the singularity is quasi-homogeneous; we can see directly that the germ $(C,p)$ is given by $g(x_0,x_1)=x_0^2-x_1^2$, which is weighted homogeneous.

\item \label{ex_4syz_NQH} The rational quintic $C\subset \PP^2$ of equation $C=V(f)=V(x_0 x_1^2 x_2^2 +x_1^5 + x_2^5)$ is singular only in $p=(1:0:0)$ and $J_f=(x_1^2 x_2^2,5 x_1^4+2 x_0 x_1 x_2^2,2 x_0 x_1^2 x_2+5 x_2^4)$. 
We have $\tau (C)=\tau _p(C)=10$. By \cite[Proposition 4.10]{DSer} one has $\mu(C)=\mu_p(C)=11$ and the singularity is non-quasi-homogeneous.

\item Consider the hypersurface in $V=V(f)\subset \mathbb{P}^n$ with $f=x_0^{a}x_1^{b}+x_2^d+\cdots +x_n^{d}$, with $d=a+b$ and $a,b\geq 2$. $V$ has singularities in $p=(1:0:\cdots:0)$ and $q=(0:1:0:\cdots:0)$, and the germs $(V,p)$ and $(V,q)$ are given by $g_p=x_1^b+x_2^d+\cdots+x_n^d$ and $g_q= x_0^a+x_2^d+\cdots+x_n^d$, respectively. Since $g_p$ and $g_q$ are weighted homogeneous, both singularities are quasi-homogeneous.

\item Consider the hypersurface in $V=V(f)\subset \mathbb{P}^3$ with $f=x_0^2x_3^3+x_1^4x_3+x_2^5-x_0x_1x_2x_3^2$. $V$ has singularities in $p=(0:0:0:1)$ and $q=(1:0:0:0)$, and the germs $(V,p)$ and $(V,q)$ are given by $g_p=x_0^2+x_1^4+x_2^5-x_0x_1x_2$ and $g_q= x_3^3+x_1^4x_3+x_2^5-x_1x_2x_3^2$, respectively. Since $g_p$ and $g_q$ are not weighted homogeneous, both singularities are non-quasi-homogeneous. \end{enumerate}
\end{example}


\section{Main result and several reformulations}

The goal of this section is to establish an efficient criterion to determine whether an isolated hypersurface singularity is quasi-homogeneous. The criterion will be in terms of the Jacobian syzygies and easy to test with several symbolic computation programs like Macaulay2 \cite{M2}.
Indeed, we have the following result.
\begin{theorem}
\label{thm1}
Let $p$ be a singular point of a projective hypersurface $V=V(f)\subset \PP^n$ and
assume that $V$ has only isolated singularities. Then $(V,p)$ is a quasi-homogeneous singularity if and only if there is a Jacobian syzygy $\rho=(A_0, \cdots, A_n)$ for $f$ and an integer $k \in \{0,\dots,n\}$ such that $A_k(p) \ne 0$.
In other words, the associated derivation $D_{\rho}$ does not vanish in $p$.
\end{theorem}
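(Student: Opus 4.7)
The strategy is to invoke Saito's criterion, stated just before the theorem: an isolated singular point $(V,p)$ is quasi-homogeneous if and only if the local defining germ $g$ lies in its own Jacobian ideal $J_g=(\partial_1 g,\ldots,\partial_n g)$ in the analytic (equivalently, algebraic) local ring at $p$. Both implications will follow by combining this characterization with Euler's identity, interpreted in a suitable affine chart containing $p$.

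For the "if" direction, after a linear change of homogeneous coordinates, one can reduce to the case $p=(1:0:\cdots:0)$ with $A_0(p)\ne 0$. Indeed, since $(A_i(p))_i$ is nonzero, one can choose a change of coordinates $M\in GL(n{+}1)$ with $Mp=e_0$ whose first row pairs non-trivially with $(A_i(p))_i$; under the induced transformation of syzygies, the new zeroth component is nonvanishing at $p$. Working in the chart $x_0=1$ with $y_i=x_i$ and $g(y)=f(1,y)$, Euler's identity gives $\partial_0 f(1,y) = d\,g - \sum_{i\ge 1} y_i\,\partial_i g$. Substituting this into the restriction of $\sum_i A_i\partial_i f=0$ to the chart yields
\[
d\,A_0(1,y)\,g(y) = -\sum_{i\ge 1}\bigl(A_i(1,y) - y_i A_0(1,y)\bigr)\partial_i g(y).
\]
Since $A_0(1,0)=A_0(p)\ne 0$, the coefficient $A_0(1,y)$ is a unit in the local ring, forcing $g\in J_g$ locally, hence quasi-homogeneity by Saito.

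For the converse, after the analogous reduction $p=(1:0:\cdots:0)$, Saito gives $g \in J_g$ in the analytic local ring, and by faithful flatness of the completion of $\CC[y]_{\mathfrak m}$ the same relation holds in the algebraic local ring. Clearing denominators produces polynomials $Q,P_1,\ldots,P_n\in \CC[y]$ with $Q(0)\ne 0$ and $Qg=\sum_{i\ge 1}P_i\partial_i g$ in $\CC[y]$. Multiplying by $d$ and using Euler's identity to eliminate $dg$ yields the affine polynomial identity
\[
Q(y)\,\partial_0 f(1,y) + \sum_{i\ge 1}\bigl(Q(y)y_i - d\,P_i(y)\bigr)\partial_i f(1,y) = 0.
\]
I would then homogenize each coefficient to a common degree $s$, producing homogeneous polynomials $\tilde A_0,\ldots,\tilde A_n\in R$ of degree $s$. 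By construction, the homogeneous polynomial $\sum_i \tilde A_i\,\partial_i f\in R_{s+d-1}$ restricts to zero on the Zariski-dense open set $\{x_0\ne 0\}$, hence vanishes identically. Therefore $(\tilde A_0,\ldots,\tilde A_n)$ is a global Jacobian syzygy, and by construction $\tilde A_0(p)=Q(0)\ne 0$.

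The main obstacle is the converse direction: Saito provides only a local analytic statement, so one must first descend to an algebraic relation using faithful flatness of the completion in order to obtain a polynomial identity in an affine chart, and then globalize this affine identity to a homogeneous syzygy on $\PP^n$ via careful degree matching. The "if" direction, by contrast, is a direct computation once the small linear-algebra reduction aligning the nonvanishing component of $\rho$ with the chart $x_0=1$ has been carried out.
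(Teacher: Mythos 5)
Your proposal is correct, and while your ``if'' direction coincides with the paper's argument (restrict to the chart $x_0\ne 0$, use the Euler relation to express $\partial_0 f(1,y)$ in terms of $g$ and its partials, conclude $g\in J_g$ from the unit $A_0(1,y)$, and invoke Saito; your general linear reduction $Mp=e_0$ with the first row of $M$ not annihilating $A(p)$ is just a cleaner packaging of the paper's explicit shear $x_0=z_0+z_k$ in its Case 2), your converse direction takes a genuinely different route. The paper first chooses coordinates so that $H_0=V(x_0)$ is transversal to $V$, making $I=(\partial_1 f,\dots,\partial_n f)$ a complete intersection; it then multiplies the local membership $\partial_0 f/x_0^{d-1}\in I_p$ by an auxiliary polynomial $h$ vanishing to high order at the other points of $|\Sigma|$ with $h(p)\ne 0$, and uses saturatedness of the complete intersection ideal ($I'=I$) to promote local membership at all points of $|\Sigma|$ to the global relation $h\,\partial_0 f\in I$. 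You instead descend Saito's analytic relation to the algebraic local ring by faithful flatness (exactly the completion-theoretic step the paper also invokes), clear denominators to get a single polynomial identity $Qg=\sum_i P_i\partial_i g$ in $\CC[y]$ with $Q(0)\ne 0$, eliminate $g$ by Euler, and homogenize; the resulting homogeneous form $\sum_i \tilde A_i\partial_i f$ vanishes on $\{x_0\ne 0\}$ and hence identically, giving the desired syzygy with $\tilde A_0(p)=Q(0)\ne 0$. This is valid and in fact more elementary: it needs no transversality choice, no auxiliary $h$, and no complete intersection or saturation argument, only the injectivity of $\CC[y]\to\CC[y]_{\mathfrak m}$ and density of the affine chart. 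What the paper's heavier construction buys is mainly extra control on the syzygy produced (its first entry can be made to vanish to high order at the remaining singular points), which is not needed for the statement itself.
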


\begin{corollary}
\label{cor1}
Let $p$ be a singular point of a hypersurface $V=V(f)\subset \PP^n$ and
assume that $V$ has only isolated singularities. Then $(V,p)$ is not a quasi-homogeneous singularity if and only if for any Jacobian syzygy $\rho=(A_0, \cdots, A_n)$ for $f$ and any integer $k \in \{0,\dots,n\}$ one has $A_k(p)= 0$.
In other words, all the associated derivations $D_{\rho}$ vanish in $p$.
\end{corollary}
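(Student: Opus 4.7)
The corollary is the logical contrapositive of Theorem~\ref{thm1}, so I focus on proving the theorem. The backbone of both directions is K.~Saito's criterion, which asserts that an isolated hypersurface germ $(g,0)\subset \mA^n$ is quasi-homogeneous if and only if $g$ belongs to its own Jacobian ideal $(\partial_1 g,\ldots,\partial_n g)$ in the analytic local ring; for an isolated singularity this is equivalent to the membership in the algebraic localization by faithful flatness of completion.

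For the direction $(\Rightarrow)$, after a projective linear change I may take $p=(1:0:\cdots:0)$ and set $g(x_1,\ldots,x_n):=f(1,x_1,\ldots,x_n)$. Saito's criterion yields polynomials $q,c_1,\ldots,c_n$ with $q(0)\ne 0$ and $q\,g=\sum_{i=1}^n c_i\,\partial_i g$. Dehomogenizing Euler's identity $\sum_j x_j\,\partial_j f=d\,f$ at $x_0=1$ gives $(\partial_0 f)|_{x_0=1}=d\,g-\sum_{i=1}^n x_i\,\partial_i g$, and substituting produces the affine relation
\[
q\cdot(\partial_0 f)|_{x_0=1}+\sum_{i=1}^{n}\bigl(q\,x_i-d\,c_i\bigr)\cdot(\partial_i f)|_{x_0=1}=0.
\]
Multiplying each coefficient by a suitable power of $x_0$ so that all summands have a common total degree produces homogeneous polynomials $A_0,\ldots,A_n\in R$. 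The combination $\sum A_i\,\partial_i f$ is then a homogeneous polynomial in $x_0,\ldots,x_n$ whose restriction to $\{x_0=1\}$ is zero, hence which vanishes identically; this is the desired syzygy, and $A_0(p)$ equals the constant term $q(0)\ne 0$.

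For the direction $(\Leftarrow)$, suppose $A_k(p)\ne 0$ for some $k$, so $(A_0(p),\ldots,A_n(p))$ is a nonzero vector of $\CC^{n+1}$. I choose a generic projective linear change $x=Ty$ such that $T^{-1}p=(1:0:\cdots:0)$ and the first row of $T^{-1}$ is not orthogonal to $(A_i(p))$; in the new coordinates the corresponding syzygy coefficient $B_0$ then satisfies $B_0(T^{-1}p)\ne 0$. After renaming, I may assume $p=(1:0:\cdots:0)$ and $A_0(p)\ne 0$. Dehomogenizing the syzygy at $x_0=1$ and eliminating $(\partial_0 f)|_{x_0=1}$ via Euler yields the local identity
\[
d\,a_0\,g=\sum_{i=1}^{n}(a_0\,x_i-a_i)\,\partial_i g,
\]
with $a_i:=A_i|_{x_0=1}$. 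Since $a_0(0)=A_0(p)\ne 0$, the element $a_0$ is a unit in the local ring, so $g\in(\partial_1 g,\ldots,\partial_n g)$, and Saito's criterion delivers quasi-homogeneity of $(V,p)$.

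The main obstacle is making $(\Leftarrow)$ robust against the possibility that the nonvanishing syzygy coefficient is indexed by some $k$ for which $p$ does not lie in the $k$-th affine chart: the resolution is the preliminary generic linear change of coordinates, justified by the observation that the two requirements (placing $p$ in $\{x_0\ne 0\}$ and ensuring the new $A_0$ is nonzero at $p$) are linear conditions on the first row of $T^{-1}$, each excluding only a proper hyperplane. The only subtlety in $(\Rightarrow)$ is that Saito's criterion produces analytic coefficients, but clearing denominators inside the algebraic localization at the maximal ideal of $p$ is legitimate thanks to faithful flatness.
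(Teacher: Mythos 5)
Your proof is correct, and its skeleton is the same as the paper's: treat the corollary as the contrapositive of Theorem \ref{thm1}, and prove the theorem by combining K.~Saito's criterion ($g$ lies in its Jacobian ideal at an isolated singularity, transferred between the analytic and algebraic local rings by faithful flatness) with the Euler relation, which converts $\partial_0 f|_{x_0=1}$ into $g$ modulo $(\partial_1 g,\dots,\partial_n g)$ and back. Where you genuinely diverge is in the globalization step of the forward direction: the paper first chooses $H_0=V(x_0)$ transversal to $V$ so that $I=(\partial_1 f,\dots,\partial_n f)$ is a saturated complete intersection, then multiplies the local membership by an auxiliary form $h$ vanishing to high order at the other points of $V(I)$ with $h(p)\neq 0$, and uses $I'=I$ to obtain the syzygy $(h,-a_1,\dots,-a_n)$; you instead clear denominators in the localization to get an honest polynomial identity on the chart $x_0=1$ and then homogenize, using that a homogeneous form vanishing on $\{x_0=1\}$ vanishes identically, with $A_0(p)=q(0)\neq 0$ since $x_0(p)=1$. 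This bypasses the transversality choice and the complete-intersection/saturation argument entirely, which is a genuine (and harmless) simplification. In the converse direction the paper reduces $a_k(p)\neq 0$, $k>0$, to $k=0$ by the explicit shear $x_0=z_0+z_k$, whereas you use a generic linear change $T$ with first column a representative of $p$ and first row of $T^{-1}$ not annihilating $(A_0(p),\dots,A_n(p))$; this is fine, but it silently uses the covariance $B(y)=T^{-1}A(Ty)$ of syzygy vectors under $f\mapsto f\circ T$, which deserves the one-line chain-rule verification that the paper carries out by hand for its special shear. The remaining steps (the unit $a_0$, $d\neq 0$, Saito's criterion, and invariance of quasi-homogeneity under a linear change of coordinates) match the paper's argument.
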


\proof
First notice that the claim in Theorem \ref{thm1} does not depend on the choice of the linear coordinate system on $\PP^n$. See the end of this proof for a simple example of the changes involved in the claim when we change the coordinates.

Since $V$ has only isolated singularities, there is a system of coordinates $x=(x_0, \cdots,x_n)$ on $\PP^n$ such that
\begin{enumerate}
\item the ideal
$I=(\partial_1 f, \cdots, \partial_n f)$ is a complete intersection defining a $0$-dimensional
subscheme $\Sigma \subset \PP^n$ and

\item
$p=(1:0: \cdots :0).$
\end{enumerate}
To get such a system of coordinates, it is enough to take the hyperplane $H_0=V(x_0)$ to be transversal to the hypersurface $V$ and, in particular, to avoid the singularities of $V$. Then the remaining coordinates $x_1, \cdots,x_n$ are chosen such that the condition (2) above holds.
Note that $p$ belongs to the support $|\Sigma|$ of this subscheme, which is a finite set.

We suppose first that $(V,p)$ is a quasi-homogeneous singularity.
Let $\mathcal{O}$ be the structure sheaf on $\PP^n$ and for any $q \in \PP^n$ denote by $\mathcal{O}_q$ the corresponding local ring, given by the fiber of $\mathcal{O}$ at $q$. Let $I_q \subset \mathcal{O}_q$ denote the ideal obtained as the fiber of the sheaf ideal $\mathcal{I}$ associated to the ideal $I$.

Since $(V,p)$ is a quasi-homogeneous singularity, it follows that the germ of the regular function
$\partial_0 f/x_0^{d-1}$ in the point $p$ is in $I_p.$
To see this, let $y=(y_1,\cdots ,y_n)$ be the affine coordinates on the principal open set given by $x_0 \ne 0$ in $\PP^n$, where $y_j=x_j/x_0$ for all
$j \in \{1,\dots, n\}$. We set $g(y)=f(1,y)$
and $g_j(y)=\partial_j f(1,y)$ for $j \in \{1,\dots, n\}$. Note that $g_j(y)$ is the partial derivative of $g(y)$ with respect to $y_j$.
We identify $I_p$ with the localization $S_{\mathfrak{m}}$ of the polynomial ring $S=\CC[y_1,\cdots,y_n]$ with respect to the maximal ideal $\mathfrak{m}=(y_1,\cdots,y_n)$.
Then the Euler relation for $f$, namely
 \begin{equation}
\label{eq2}
d\, f= \sum_{j=0}^nx_j \partial_j f,
 \end{equation}
yields
 \begin{equation}
\label{eq3}
d\, g(y)=\partial_0 f(1,y)+ \sum_{j=1}^ny_j\, g_j(y).
 \end{equation}
Recall Saito's characterization of isolated quasi-homogeneous singularities given in \cite{S}: the germ $g$ is quasi-homogeneous if and only if $g \in J_g$, where $J_g$ is the Jacobian ideal of $g$ in the local ring $S_{\mathfrak{m}}$. Note that Saito Theorem works in the rings of (formal) power series associated with $S_{\mathfrak{m}}$, but the passage from such rings to the ring $S_{\mathfrak{m}}=\mathcal{O}_p$ is a standard application of completion theory, see
\cite[Chapter 10 and especially Exercise 8]{AM}.

Therefore we get that $g \in I_p$. It follows by \eqref{eq3} that the germ
\[
\partial_0f(1,y)=\partial_0f/x_0^{d-1}
\]
is also in $I_p$.
Now let $h\in S$ be a homogeneous polynomial of a sufficiently high degree $N$, vanishing with high order in all the points $q \in |\Sigma|$, $q \ne p$ and such that $h(p) \ne 0$.
It is then clear that the germ
\[
h\, \partial_0f/x_0^{N+d-1}
\]
is in $I_q$
for all $q \in |\Sigma|$. This implies that $h \, \partial_0f$ belongs to the saturation $I'$ of the ideal $I$ with respect to the maximal ideal $(x_0, \cdots, x_n)$. Since $I$ is a complete intersection, it follows that $I'=I$ and hence
\[
h \, \partial_0f \in I,
\]
see also \cite[Section 2]{DBull}). In other words, we have a relation
\[
h\, \partial_0f=\sum_{j=1}^na_j\, \partial_jf,
\]
which, in turn, gives rise to a Jacobian syzygy
\[
\rho=(h,-a_1, \cdots,-a_n)
\]
for $f$ with $h(p) \ne 0$.

Now we assume, conversely, that we have a Jacobian syzygy for $f$
 \begin{equation}
\label{eq4}
\sum_{j=0}^na_j \, \partial_jf=0
\end{equation}
such that $a_k(p) \ne 0$ for at least one index $k \in \{0,\dots,n\}$.

\medskip

\noindent{\bf Case 1.} Suppose that $k=0$. Then with the above notations, we can rewrite equation \eqref{eq4} for $x_0=1$ and $x_j=y_j$ in the form
\[
b_0(y)\,(d\, g(y)-\sum_{j=1}^n y_j\, g_j(y))+\sum_{j=1}^nb_j(y)\, g_j(y)=0,
\]
where $b_j(y)=a_j(1,y)$. Since by assumption $b_0(0) \ne 0$, this last equality implies that $g(y) \in I_p$, and hence using again Saito's characterization of isolated quasi-homogeneous singularities given in \cite{S}, we conclude that $(V,p)$ is quasi-homogeneous.

\medskip

\noindent{\bf Case 2.} Suppose $a_0(p)=0$ and that $a_k(p)\ne 0$ for some $k>0$. We show that by a suitable change of coordinates on $\PP^n$, preserving the point $p$, we may get back to the situation described in Case 1.
Consider the coordinate change
 \begin{equation}
\label{eq5}
x_0=z_0+z_k \text{ and } x_j=z_j \text{ for any } j\ge 1,
\end{equation}
and let
\[
F(z)=f(z_0+z_k,z_1, \cdots, z_n).
\]
Then one has
\[
F_s(z):=\frac{\partial F}{\partial z_s}(z)=\partial_s f(z_0+z_k,z_1, \cdots, z_n)
\]
for any $s \ne k$ and

\[
F_k(z):=\frac{\partial F}{\partial z_k}(z)=\partial_0 f(z_0+z_k,z_1, \cdots, z_n)+\partial_k f(z_0+z_k,z_1, \cdots, z_n).
\]
Set
\[
c_j(z):=a_j(z_0+z_k,z_1, \cdots, z_n),
\]
for all $j \in \{0,\dots,n\}$. If we substitute $x_0$ by $z_0+z_k$ and any
$x_j$ by $z_j$ for $j>0$ in \eqref{eq4}, we get
\[
\sum_{j=0}^nc_j(z)\partial_j f(z_0+z_k,z_1, \cdots, z_n)=0.
\]
This relation can be rewritten as
\[
(c_0(z)-c_k(z))F_0(z)+ \sum_{j=1}^nc_j(z)F_j(z)=0. 
\]
Since $c_0(p)-c_k(p)=a_0(p)-a_k(p)\ne 0$, the last equality is a syzygy for $F$ with the first entry non vanishing in $p$. Using Case 1 we infer that $(W,p)$ is a quasi-homogeneous singularity, where $W=V(F)$.
Since the singularities $(V,p)$ and $(W,p)$ are isomorphic, it follows that
$(V,p)$ is a quasi-homogeneous singularity as well.
\endproof

We can reformulate Theorem \ref{thm1} in the terms of a minimal set of generators of the graded $R$-module of Jacobian syzygies of $f$ defined in (\ref{eq: syzygy module}), by observing that $Syz(J_f)=D_0(f)$, the $R$-module of all derivations killing $f$, as defined in \eqref{eq11}.

\begin{theorem}\label{syz}
Let $p$ be a singular point of the hypersurface $V=V(f) \subset \PP^n$ and
assume that $V$ has only isolated singularities. 

Then $(V,p)$ is a quasi-homogeneous singularity if and only if there is a generator  $\rho_j=(A^j_{0}, \cdots, A^j_{n})$ of $Syz(J_f)$ and an integer $k \in \{0,\dots,n\}$ such that $A^j_{k}(p) \ne 0$.
\end{theorem}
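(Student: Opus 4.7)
The plan is to deduce Theorem \ref{syz} from Theorem \ref{thm1} by a direct reduction argument, using the fact that a minimal set of generators spans $\mathrm{Syz}(J_f)$ as an $R$-module.

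One direction is immediate: if some generator $\rho_j = (A^j_0, \ldots, A^j_n)$ of $\mathrm{Syz}(J_f)$ satisfies $A^j_k(p) \neq 0$ for some $k$, then $\rho_j$ is in particular a Jacobian syzygy with a non-vanishing entry at $p$, so Theorem \ref{thm1} yields that $(V,p)$ is quasi-homogeneous.

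For the converse, I would proceed as follows. Assume that $(V,p)$ is quasi-homogeneous. By Theorem \ref{thm1}, there exists a Jacobian syzygy $\rho = (A_0, \ldots, A_n) \in \mathrm{Syz}(J_f)$ and an index $k \in \{0, \ldots, n\}$ such that $A_k(p) \neq 0$. Since $\rho_1, \ldots, \rho_m$ generate $\mathrm{Syz}(J_f)$ as an $R$-module, we may write
\[
\rho = \sum_{j=1}^m h_j \, \rho_j
\]
for some $h_j \in R$. Reading this component-wise, the $k$-th entry satisfies $A_k = \sum_{j=1}^m h_j \, A^j_k$ in $R$. Since evaluation at $p$ is a ring homomorphism, we obtain
\[
A_k(p) = \sum_{j=1}^m h_j(p) \, A^j_k(p) \neq 0,
\]
so at least one summand must be non-zero. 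In particular there exists an index $j$ with $A^j_k(p) \neq 0$, which is precisely the conclusion.

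There is no real obstacle here beyond making explicit the passage from an arbitrary syzygy to one in the minimal generating set; the only subtlety to flag is that evaluation at a point respects the $R$-module structure, so that a non-vanishing linear combination forces the non-vanishing of at least one term. Note that we do not need the $h_j$ to have any particular form (they are automatically homogeneous of the appropriate degrees when $\rho$ is homogeneous, by the graded structure of $\mathrm{Syz}(J_f)$, but this plays no role in the argument).
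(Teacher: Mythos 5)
Your argument is correct and coincides with the paper's (implicit) reasoning: the paper states Theorem \ref{syz} as an immediate reformulation of Theorem \ref{thm1}, the point being exactly that any Jacobian syzygy is an $R$-linear combination of the minimal generators, so a syzygy entry non-vanishing at $p$ forces some generator entry to be non-vanishing at $p$. You have simply made explicit the evaluation-at-$p$ step that the paper leaves tacit, so there is nothing to add.
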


This result can moreover be reformulated in terms of the first syzygy matrix of a minimal free resolution of the Jacobian ideal $J_f$ of $f$ as follows. 

Let $M_f$ be the
$(n+1) \times m$-matrix with entries in $R$, such that the $j$-th column
is given by $(A^j_{0}, \cdots,A^j_{n})$, that is by the components of the $j$-th generating syzygy in a minimal set of generators for $Syz(J_f)$, for all $j\in \{1,\dots,m\}$. Then Theorem \ref{thm1} can be rephrased as follows, and extends the case of reduced free and nearly free plane curves, proved in \cite{ABMR}.

\begin{corollary} 
\label{corT1}
Let $p$ be a singular point of a hypersurface $V=V(f)$ in $\PP^n$ and
assume that $V$ has only isolated singularities. Then $(V,p)$ is a quasi-homogeneous singularity if and only if
\[
\rank M_f(p) \geq 1,
\]
where $M_f(p)$ is the $(n+1) \times m$-matrix with entries in $\CC$ obtained by evaluating the matrix $M_f$ in the point $p$.
\end{corollary}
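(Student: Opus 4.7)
The plan is to observe that Corollary \ref{corT1} is simply a matrix-language restatement of Theorem \ref{syz}, which has already been established. By construction, the columns of $M_f$ are the coefficient vectors of the elements of a minimal set of generators $\rho_1, \dots, \rho_m$ of $\mathrm{Syz}(J_f)$, so that the $(k,j)$-entry of $M_f$ is precisely $A^j_k$, for $k \in \{0,\dots,n\}$ and $j \in \{1,\dots,m\}$. Evaluating entrywise at $p$ yields the complex $(n+1)\times m$ matrix $M_f(p)$.

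The key step is then the elementary observation that a matrix with entries in $\mathbb{C}$ has rank at least $1$ if and only if it is not the zero matrix, i.e., if and only if at least one of its entries is non-zero. Applied to $M_f(p)$, this says that $\rank M_f(p)\geq 1$ if and only if there exist indices $j \in \{1,\dots,m\}$ and $k \in \{0,\dots,n\}$ with $A^j_k(p)\neq 0$. By Theorem \ref{syz}, this is exactly the condition for $(V,p)$ to be a quasi-homogeneous singularity, and the corollary follows.

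There is no genuine obstacle at this point, since all the nontrivial content has been packaged into Theorem \ref{thm1} and its reformulation Theorem \ref{syz}. The only minor conceptual point worth underlining is that Theorem \ref{thm1} is phrased for an \emph{arbitrary} Jacobian syzygy, while $M_f$ records only a \emph{minimal set of generators}; the equivalence between the two versions is precisely the content of Theorem \ref{syz}, and relies on the fact that every syzygy is an $R$-linear combination of the $\rho_j$, so all entries of all syzygies vanish at $p$ simultaneously if and only if the entries of the generating $\rho_j$ do. Thus the role of $M_f$ here is purely organizational: it repackages the generators of $\mathrm{Syz}(J_f)$ into a single object on which the quasi-homogeneity of $(V,p)$ can be tested by computing a rank, which is immediate with symbolic computation software such as \texttt{Macaulay2} or \texttt{Singular}.
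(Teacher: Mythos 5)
Your proposal is correct and follows exactly the route the paper intends: Corollary \ref{corT1} is stated there as a direct rephrasing of Theorem \ref{thm1} via Theorem \ref{syz}, using the same observation that $\rank M_f(p)\ge 1$ means precisely that some entry $A^j_k(p)$ of the first syzygy matrix is non-zero. Your remark that the passage from arbitrary syzygies to a minimal generating set is handled by $R$-linearity is also the paper's (implicit) justification, so nothing is missing.
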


To sum up, given a hypersurface $V=V(f)\subset \PP^n$ with only isolated singularities, we can consider the
ideal $I_f$ generated by the entries of a first syzygy matrix $M_f$ of the Jacobian ideal and the support $Q_f$
of the scheme in $\PP^n$ defined by $I_f$. If $p \in V$ is a singular point, and if we know the coordinates of $p$, then we can choose local coordinates at $p$ and apply K. Saito's criterium in \cite{S} to decide whether $(V,p)$ is a quasi-homogeneous singularity or not, or check whether $p \notin Q_f$.

If we don't know the coordinates of all singular points of $V$, we check whether the ideal $J_f+I_f$ has no zero in $\PP^n$, that is $J_f+I_f$ has finite codimension in the polynomial ring $R$.

Another possible approach for the computation of the total Milnor number is given by the following result.

\begin{proposition}
\label{rk2}

Let  $V=V(f)\subset \PP^n$ with only isolated singularities. By a possible change of coordinates, assume that the hyperplane $H_0=V(x_0)$ is such that $V_0=V \cap H_0$ is smooth. Set $g(y_1,\dots,y_n):=f(1,y_1,\dots,y_n)$, and
consider the ideals in $\mathbb{C}[y_1,\ldots,y_n]$
\[
I_i:=(g,g_1,\ldots,g_n), \qquad 
I_n:=(g^n,g_1,\ldots,g_n).
\]
Then we have
\[
\tau(V)= \dim \frac{\mathbb{C}[y_1,\ldots,y_n]}{I_1}, \qquad \mu(V)= \dim \frac{\mathbb{C}[y_1,\cdots,y_n]}{I_{n}}.
\]

\end{proposition}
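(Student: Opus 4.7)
My plan is to reduce both identities to purely local computations at the singular points of $V$. The transversality hypothesis that $V \cap H_0$ is smooth ensures that every singular point of $V$ lies in the affine chart $\{x_0 \ne 0\}$ and corresponds, via $y_i = x_i/x_0$, to a point $p \in \mA^n$ where $g$ and all the partials $g_1,\ldots,g_n$ vanish. The scheme-theoretic zero locus of $I_1$ in $\mA^n$ therefore coincides as a set with $\text{Sing}(V)$, and the same holds for $V(I_n)$, since any common zero of $g^n,g_1,\ldots,g_n$ automatically annihilates $g$ and the $g_i$. In particular, both quotients $\CC[y_1,\ldots,y_n]/I_1$ and $\CC[y_1,\ldots,y_n]/I_n$ are Artinian.

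For the Tjurina identity, the Chinese Remainder decomposition gives
\[
\dim_\CC \CC[y_1,\ldots,y_n]/I_1 \;=\; \sum_{p\in \text{Sing}(V)} \dim_\CC \bigl(\CC[y_1,\ldots,y_n]/I_1\bigr)_{\mathfrak{m}_p}.
\]
After translating $p$ to the origin and passing to the analytic local ring $\CC\{y_1,\ldots,y_n\}$ (which is harmless for Artinian quotients, cf.\ \cite[Ch.~10]{AM}), the $p$-th summand equals $\dim \CC\{y_1,\ldots,y_n\}/(g,g_1,\ldots,g_n) = \tau_p(V)$ by Definition~\ref{def: Milnor and Tjurina}, and summing yields the first identity. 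The same strategy will work for the Milnor identity once we prove, locally at each $p \in \text{Sing}(V)$, the equality
\[
\dim \sO_{\mA^n,p}/(g^n, g_1,\ldots,g_n) \;=\; \mu_p(V).
\]

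This last step is the main obstacle, and it is precisely where the Brian\c con--Skoda theorem \cite{BS} enters: at an isolated critical point $p$ of a holomorphic function $g$ of $n$ variables, the power $g^n$ already belongs to the Jacobian ideal $(g_1,\ldots,g_n)\sO_{\mA^n,p}$. Since $V$ has only isolated singularities and they all lie in the chosen affine chart, each $p \in \text{Sing}(V)$ is such an isolated critical point of $g$. Consequently $(g^n,g_1,\ldots,g_n)\sO_{\mA^n,p} = (g_1,\ldots,g_n)\sO_{\mA^n,p}$, and the local length reduces to $\dim \sO_{\mA^n,p}/(g_1,\ldots,g_n) = \mu_p(V)$. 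Summing over $\text{Sing}(V)$ then gives $\dim \CC[y_1,\ldots,y_n]/I_n = \mu(V)$. Without the Brian\c con--Skoda inclusion one would only obtain the inequality $\dim \CC[y_1,\ldots,y_n]/I_n \ge \mu(V)$; the role of the specific exponent $n$ on $g$ is precisely to saturate the local contribution up to the correct local Milnor number, while simultaneously excluding any critical points of $g$ that happen to lie off $V$ (where $g^n$ becomes a local unit).
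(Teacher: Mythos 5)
Your proposal is correct and follows essentially the same route as the paper: all singularities lie in the chart $x_0\neq 0$, the Artinian quotients split into local contributions at the singular points, the local contribution of $I_1$ is $\tau_p(V)$ by definition, and the Brian\c con--Skoda corollary gives $g^n\in(g_1,\ldots,g_n)\mathcal{O}_p$ so that the local contribution of $I_n$ is $\mu_p(V)$, with $g^n$ serving exactly to discard critical points of $g$ off $V$. The only difference is that you spell out the Chinese Remainder/localization and algebraic-versus-analytic comparison steps, which the paper leaves implicit.
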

\begin{proof}
  By assumption the singularities of $V$ are in the affine open set $U_0\cong \mathbb{C}^n$ given by
$x_0 \ne 0$, and  $y_j=x_j/x_0$ are the corresponding coordinates on $U_0$.
Then the polynomial 
$g$ is a singularity germ in each singular point, so the claim on the total Tjurina number immediately follows.

To determine the total Milnor number, observe that if $p$ is one of the singularities of $V$, we can apply
a result of Brian\c con and Skoda, see \cite[Corollary]{BS}, and obtain that
$g^{n}$ belongs to the ideal of the local ring $\mathcal{O}_p$ spanned by
$g_1,\cdots,g_n$. It follows that 
$$
\mu(V)= \dim \frac{\mathbb{C}[y_1,\cdots,y_n]}{I_{n}},
$$
where the ideal $I_{n}$ is spanned by $g^{n},g_1,\cdots,g_n$.
Observe that $g^{n}$ is needed, since we consider only the zeros of the ideal $J_g=(g_1,\ldots,g_n)$ in $U_0$ which are situated on $g=0$, that is on $V\cap U_0$.
\end{proof}

\begin{remark}
  Proposition \ref{rk2} gives us a simple way to decide whether all the singularities of $V$
are quasi-homogeneous, which is equivalent to having
$\mu(V)=\tau(V)$.

\end{remark}
Toh\u aneanu has shown in \cite[Theorem 2.2]{To} that a curve $C \subset \PP^2$ having only isolated singularities is free if and only it admits a Jacobian syzygy which is non-zero at any point of $\PP^2$.
The following result tells what happens when the curve is not free, and in fact this property holds in any dimension.

\begin{proposition}
\label{propP}
Let $V:f=0$ be a hypersurface in $\PP^n$ having only isolated quasi-homogeneous singularities. Then for any countable subset $Y$ in $\PP^n$,
there is a syzygy of $V$ of degree at most $d_m$,  the maximal exponent of $V$, which is non-zero at any point of the set $Y$. In particular $Y$ can be the singular set of $V$.
\end{proposition}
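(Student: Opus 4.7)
The plan is to locate the desired syzygy inside the finite-dimensional $\CC$-vector space ${\rm Syz}(J_f)_{d_m}$, using the classical fact that a non-zero vector space over an uncountable field is not a countable union of proper linear subspaces. For each $q \in Y$, I set
\[
W_q = \{\rho \in {\rm Syz}(J_f)_{d_m} : \rho(q) = 0\},
\]
which is a $\CC$-linear subspace. Once I verify that every $W_q$ is a \emph{proper} subspace, the countability of $Y$ together with the uncountability of $\CC$ produces an element $\rho \in {\rm Syz}(J_f)_{d_m}$ lying in none of the $W_q$; this $\rho$ is a Jacobian syzygy of degree $d_m$ (in particular of degree at most $d_m$) non-vanishing at every point of $Y$.

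The crux is the following pointwise claim: for every $q \in \PP^n$, at least one minimal generator $\rho_j$ of ${\rm Syz}(J_f)$ satisfies $\rho_j(q) \ne 0$. I would treat two cases. If $q$ is a singular point of $V$, then by hypothesis $(V,q)$ is quasi-homogeneous, and Theorem \ref{syz} supplies such a $\rho_j$ directly. If $q$ is not singular, then some $\partial_i f(q) \ne 0$; the Koszul syzygy having $\partial_j f$ and $-\partial_i f$ in positions $i$ and $j$ is a Jacobian relation which, when evaluated at $q$, is non-zero. Expressing this Koszul syzygy as an $R$-linear combination $\sum_k h_k \rho_k$ of the minimal generators and evaluating at $q$, some summand must be non-zero, forcing $\rho_k(q) \ne 0$ for some $k$.

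Given a generator $\rho_j$ with $\rho_j(q) \ne 0$ and degree $d_j \le d_m$, I multiply it by $x_\ell^{d_m - d_j}$ for any coordinate $x_\ell$ with $x_\ell(q) \ne 0$. The product is an element of ${\rm Syz}(J_f)_{d_m}$ still not vanishing at $q$, so $W_q \subsetneq {\rm Syz}(J_f)_{d_m}$. To make the countable-union argument explicit, one picks $\rho_* \notin W_{q_1}$ and any second element $\rho_{**} \in {\rm Syz}(J_f)_{d_m}$, and observes that each $W_q$ intersects the affine line $\{\rho_* + t\,\rho_{**} : t \in \CC\}$ in at most one point, so the union over $q \in Y$ of these intersections is a countable subset of $\CC$ and cannot exhaust the uncountable parameter space.

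The step I expect to be the main obstacle is the non-singular case of the pointwise claim: the minimal generators $\rho_1, \dots, \rho_m$ are global by construction, and it is not \emph{a priori} clear that at each non-singular point some generator fails to vanish. The Koszul trick resolves this by providing an explicit syzygy non-vanishing at $q$, whose decomposition into the $\rho_k$'s then forces one of them to be non-zero at $q$. With the pointwise claim established, the remaining steps are formal.
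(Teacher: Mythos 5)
Your proposal follows essentially the same route as the paper: work in the finite-dimensional space ${\rm Syz}(J_f)_{d_m}$, show that for every $q$ (singular or not) the subspace of syzygies vanishing at $q$ is proper --- using Theorem \ref{syz} (equivalently Theorem \ref{thm1}) at the quasi-homogeneous singular points and the Koszul relations at the smooth points --- and then conclude because a countable union of proper subspaces cannot exhaust a complex vector space. Two remarks on the details. First, your treatment of the smooth points is in fact tighter than the paper's: the paper uses the degree-$(d-1)$ Koszul syzygy directly and asserts $d-1\le d_m$, which can fail (e.g.\ for the free curve $xyz$ with exponents $(1,1)$ one has $d_m=1<2$), whereas your step of decomposing the Koszul syzygy as $\sum_k h_k\rho_k$ to extract a minimal generator non-vanishing at $q$, and then multiplying by $x_\ell^{\,d_m-d_j}$ with $x_\ell(q)\ne 0$ to land exactly in degree $d_m$, sidesteps this issue entirely. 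Second, your ``explicit'' version of the covering argument is not quite correct as stated: if some $W_q$ with $q\ne q_1$ happens to contain both $\rho_*$ and $\rho_{**}$, then it contains the entire affine line $\{\rho_*+t\rho_{**}\}$, so ``at most one point'' fails; the choice $\rho_*\notin W_{q_1}$ only excludes this for $q_1$. This is a minor, fixable slip rather than a real gap, since the classical fact you invoke at the outset is correct and is exactly what the paper uses at this point (each $W_q$ is a proper linear subspace, hence closed and nowhere dense, or of measure zero, so Baire category or measure theory gives $\bigcup_{q\in Y}W_q\ne {\rm Syz}(J_f)_{d_m}$).
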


\proof
Consider the finite dimensional $\CC$-vector space
$$W={\rm Syz}(J_f)_{d_m}.$$
For any point $y \in Y$ we denote by $E_y$ the vector subspace of $W$ consisting of all the Jacobian syzygies of $V$ in $W$ which vanish at the point $y$.
If $y\in Y$ is a singular point of $V$, then Theorem \ref{thm1} in the form given in Corollary \ref{corT1} shows that there is a syzygy in $W$ not vanishing at $y$ and hence we have $E_y \ne W$.

If $y\in Y$ is not a singular point of $V$, at least one of the partial derivatives $\partial_jf$ does not vanish at $y$.
It follows that one of the syzygies $\rho_{ij}$ of degree $(d-1)$, associated to the Koszul relation
$$\partial_jf  \partial_if - \partial_if  \partial_jf =0$$
is non zero at $y$. Note that we have $(d-1) \leq d_m$, since $d_m$ is the maximal exponent of $V$.
Hence in this case we also have $E_y \ne W$.

Now applying Baire Theorem or some basic facts of Measure Theory,
we see that
$$E=\cup _{y \in Y}E_y \ne W.$$
It follows that any syzygy in $W \setminus E$ satisfies our claim.
\endproof

We end this section with a final example concerning hypersurfaces in $\PP^n$.

\begin{example}\label{Chebyshev_Hyp}{\bf Chebyshev hypersurfaces.} 
Consider the hypersurface $C(n, d, k)\subset \PP^n$ for $n\geq 2$ defined by the homogeneous equation $f(n, d, k) = 0$, where the polynomial $f(n, d, k)$ is the homogenization (using $x_0$) of the polynomial
 \begin{equation}
 g(n,k,d)= T_d(x_1)+\cdots +T_d(x_n)+ k,
 \end{equation}
and $T_d$ is the $d$-th Chebyshev polynomial. 
It is easy to see that the hypersurface $C(n, d, k)$ is smooth, unless $k$ is an integer satisfying $|k|\leq n$ and $n+k$ is even. By \cite{DS1} if these two conditions are fulfilled, then the hypersurface $C(n, d, k)$ is nodal, and the number of nodes is given by 
\begin{equation}
 \mathcal{N}(C(n, d, k))= \left\{ \begin{array}{ll}
 {n\choose a}d_1^n,& d=2d_1+1 \\
 {n\choose a}d_1^n\left(1-\frac{1}{d_1}\right)^a,& d=2d_1+1
 \end{array} \right.
\end{equation}
when $n+k=2a$.

For $d$ odd the maximal number of nodes is obtained for $a =\left[\frac{n}{2}\right]$. Although for $d$ even, it is not clear for which $k$ the maximum of $\mathcal{N}(C(n, d, k))$ is attained, one may show that for $d \geq n+2$, the maximum is again attained for $a =\left[\frac{n}{2}\right]$.

We will call the {\em Chebyshev hypersurface} $C(n, d)$ the hypersurface corresponding to $a =\left [\frac{n}{2}\right]$, $k = 0$ for $n$ even, and $k=1$ for $n$ odd. Consider, for instance, the nodal Chebyshev hypersurface $C(4,6)=V(f)$ with
\begin{equation}
 f=32(x_1^6+x_2^6+x_3^6+x_4^6)-48x_0^2(x_1^4+x_2^4+x_3^4+x_4^4)+18x_0^4(x_1^2+x_2^2+x_3^2+x_4^2)-4x_0^6
\end{equation}

\noindent which is a $20$-syzygy hypersurface in $\PP^4$ with $\mathcal{N}(4,6,0)=216$ nodes. As expected $\rank M(f)(p)\geq 1$ for all the $216$ nodes.

\end{example}

\begin{example}
    Consider the $8$-syzygy hypersurface $V=V(f)\subset \mathbb{P}^3$ with $f=x_0^3x_3+x_1^4+x_1x_2^2x_3+x_0x_1^3$. The only singularities are in $p=(0:0:0:1)$ and $q=(0:0:1:0)$; moreover we have $\rank M_f(p)=0$ and $\rank M_f(q)=1$, therefore $p$ is a non-quasi-homogeneous singularity, while $q$ is quasi-homogeneous. 
\end{example}
\begin{example}
    Consider $V=V(f)\subset \mathbb{P}^n$ with $f=x_0^2x_n^3+x_1^4x_n+(x_2^2+x_3^2+\cdots +x_{n-1}^2)-x_0x_1x_n^2(x_2+x_3+\cdots +x_{n-1})$; the singularities are in $p=(1:0:\cdots:0)$ and $q=(0:\cdots:0:1)$, with $\rank M_f(p)=\rank M_f(q)=0$, therefore both singularities are non-quasi-homogeneous. 
\end{example}


\section{A geometric characterization of quasi-homogeneous singularities of reduced plane curves}

In this section, we focus on reduced plane curves $C = V (f )\subset \PP^2$ and using polar
maps we will give an alternative and more geometric proof of the above characterization of quasi-homogeneous singularities
on curves (see Theorem \ref{main2}) extending to {\em any} reduced plane curve our previous techniques and results on free and nearly free plane curves given in \cite[Theorem 3.3 and Theorem 5.6]{ABMR}.

 In this framework, the symmetric algebra $S(J_f)=\oplus _{n\ge 0}Sym^nJ_f$ of the Jacobian ideal will play an important role. In particular, we will strongly use the Cohen-Macaulayness of $S(J_f)$ which follows from \cite[Corollary 10.4]{HSV}. In fact, we have:

 \begin{theorem} \label{CM}
 Let $I\subset R=\CC[x_0,\cdots ,x_n]$ be a homogeneous ideal of height $>0$. Assume:
 \begin{itemize}
 \item[(a)] $I$ is an almost complete intersection,
 \item[(b)]
 $\depth (R/I)\ge \dim(R/I)-1.$
 \end{itemize}
 \end{theorem}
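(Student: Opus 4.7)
From the preceding paragraph it is clear that the intended conclusion of Theorem \ref{CM} is that under hypotheses (a) and (b) the symmetric algebra $S(I)=\bigoplus_{n\ge 0}\mathrm{Sym}^n(I)$ is Cohen--Macaulay. My plan is to obtain this as an application of the approximation-complex machinery of Herzog--Simis--Vasconcelos, along the lines of \cite[Corollary~10.4]{HSV}, rather than to re-derive their result from scratch.

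First, I would fix a minimal generating set $a_0,\ldots ,a_m$ of $I$. Hypothesis (a) gives $m+1=\mathrm{ht}(I)+1$, so the generators differ from a regular sequence by exactly one element. Form the Koszul complex $K_\bullet$ on $a_0,\ldots,a_m$ and consider the associated approximation complex $\mathcal{Z}_\bullet$, built from the cycles $Z_i\subseteq K_i$ with differentials twisted by auxiliary variables $T_0,\ldots ,T_m$. By construction $\mathcal{Z}_\bullet$ lives over the polynomial extension $R[T_0,\ldots ,T_m]$ and satisfies $H_0(\mathcal{Z}_\bullet)=S(I)$; the point is to show that $\mathcal{Z}_\bullet$ is acyclic and yields the desired free resolution.

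Next, I would invoke the HSV acyclicity criterion, which asserts that $\mathcal{Z}_\bullet$ is acyclic provided the Koszul homology modules satisfy a \emph{sliding depth} bound of the form $\depth H_i(K_\bullet)\ge \dim(R/I)-i$ for all $i$. For an almost complete intersection the higher Koszul homology collapses drastically: via the short exact sequence produced by the one extra relation, every $H_i(K_\bullet)$ is essentially governed by $R/I$ itself. Consequently hypothesis (b) supplies precisely the sliding depth inequalities needed. Once acyclicity is established, $\mathcal{Z}_\bullet$ is a finite free resolution of $S(I)$ over $R[T_0,\ldots ,T_m]$, and comparing its length with the codimension of $S(I)$ via Auslander--Buchsbaum forces projective dimension equal to codimension, which is Cohen--Macaulayness.

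The delicate step is the second one: verifying that (b) really produces \emph{all} the sliding-depth inequalities that the acyclicity criterion demands. This is exactly where hypothesis (a) does its work, since the almost complete intersection hypothesis restricts the nontrivial Koszul homology to a shape in which a single bound on $\depth(R/I)$ controls the whole tower. Without (a), one would have to impose depth estimates on the entire sequence $\{H_i(K_\bullet)\}_{i\ge 1}$, a much more stringent requirement; the elegance of the HSV statement is precisely that (a)+(b) together is the minimal package.
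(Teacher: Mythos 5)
You correctly identify the intended conclusion (Cohen--Macaulayness of the symmetric algebra $S(I)$), and your overall route is the same as the paper's, which consists of the citation of \cite[Corollary 10.4]{HSV} together with one extra observation (see below). However, the internal argument you sketch breaks down precisely at the decisive step. The terms of the approximation complex $\mathcal{Z}_\bullet$ are $Z_i\otimes_R R[T_0,\ldots,T_m]$, where $Z_i\subseteq K_i$ are the modules of Koszul \emph{cycles}; these are not free $R$-modules in general, so even after acyclicity is established $\mathcal{Z}_\bullet$ is \emph{not} a finite free resolution of $S(I)$ over $R[T_0,\ldots,T_m]$, and the Auslander--Buchsbaum argument ``projective dimension equals codimension'' cannot be run on it. In the HSV theory, Cohen--Macaulayness of $S(I)$ is extracted from the acyclic complex by a depth chase on the modules $Z_i$ (their depths being controlled, via the cycle--boundary--homology exact sequences, by the sliding-depth bounds), together with the dimension count $\dim S(I)=\dim R+1$, which holds for an almost complete intersection because $\mu(I_p)\le \dim R_p+1$ is automatic; none of this is captured by your final step. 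Moreover, the sliding-depth condition is misquoted: it reads $\depth H_i(K_\bullet)\ge \dim R-\mu(I)+i$, an inequality that \emph{strengthens} as $i$ grows. For an almost complete intersection only $H_0=R/I$ and $H_1$ are nonzero; hypothesis (b) is exactly the required bound for $H_0$, but for $H_1$ one needs $\depth H_1\ge \dim R/I$, that is, $H_1$ must be a maximal Cohen--Macaulay $R/I$-module. This does not follow formally from (b): one identifies $H_1$ (by self-duality of the Koszul complex) with $\mathrm{Ext}^{g}_R(R/I,R)$, $g=\mathrm{ht}\, I$, and then argues by duality; your phrase that the higher Koszul homology is ``essentially governed by $R/I$ itself'' glosses over exactly the nontrivial verification that \cite[Corollary 10.4]{HSV} encapsulates.

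There is also a smaller omission: the HSV results are local statements, whereas the theorem concerns a graded algebra over $R=\CC[x_0,\ldots,x_n]$. The only content the paper's proof adds to the citation is this graded-to-local passage, namely that a graded ring is Cohen--Macaulay if and only if its localization at the irrelevant maximal ideal is Cohen--Macaulay (\cite[Proposition 4.10]{HR}); your proposal either needs this reduction or would have to redo the HSV arguments verbatim in the graded category.
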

 Then $Sym(I)$ is a graded Cohen-Macaulay ring.
\begin{proof} It follows from \cite[Corollary 10.4]{HSV} taking into account that a graded ring is Cohen-Macaulay if and only if its localization at the maximal homogeneous ideal is Cohen-Macaulay (see \cite[Proposition 4.10]{HR}). 
\end{proof}

\vskip 4mm

Recall that for $n=2$ and $m\geq 3$ a minimal free resolution of $J_f$ looks like:
\begin{equation}\label{eq: m syzygy sequence}
0 \longrightarrow \bigoplus_{j=1}^{m-2} R(d-1-e_j) 
\overset {P_f}{\longrightarrow} \bigoplus_{i=1}^m R(-d_i) \overset {M_f}{\longrightarrow} R^3 \longrightarrow J_f (d-1)\longrightarrow 0,
\end{equation}

\medskip \noindent \medskip with $e_1\leq \cdots \leq e_{m-2}$ and $d_1\leq d_2\leq \cdots \leq d_m $.
\end{definition}

From the exact sequence (\ref{eq: m syzygy sequence}) we get:
\[
d-1-\sum_{i=1}^md_i=\sum_{j=1}^{m-2}(d-1-e_j),
\]
and
\begin{equation}\label{tau_C}\displaystyle \deg(J_f)=(d-1)^2-\sum_{1\leq i<j\leq m}d_id_j+\sum_{1\leq i<j\leq m-2}(d-1-e_i)(d-1-e_j)+(1-d)\sum_{j=1}^{m-2}(d-1-e_j).
\end{equation}

Moreover, it follows from \cite{D} and \cite{HS} that
\begin{equation}\label{def e_j}
 e_j=d+d_{j+2}-1+\epsilon _j,
\end{equation}
for $j=1,\cdots , m-2$ and some integers $\epsilon_j\geq 1$. This provides 
\begin{equation}\label{def epsilon_j}
 d_1+d_2=d-1+ \sum_{i=1}^{m-2} \epsilon _j.
\end{equation}

\begin{example}\label{ex_msyz}
\begin{enumerate}
\item \label{ex_3syz} The rational quintic $C\subset \PP^2$ of equation $C=V(f)=V(x_0^2 x_1^3+3 x_1^5-4 x_0 x_1^3 x_2+x_0^2 x_1 x_2^2)$ is a $3$-syzygy curve with two singular points in $p=(0:0:1)$ and $q=(1:0:0)$. The Jacobian ideal is $J_f=(2 x_0 x_1^3-4 x_1^3 x_2+2 x_0 x_1 x_2^2,3 x_0^2 x_1^2+15 x_1^4-12 x_0 x_1^2 x_2+x_0^2 x_2^2,-4 x_0 x_1^3+2 x_0^2 x_1 x_2 
)$ and it has a minimal free $R$-resolution of the following type:
\[
0 \longrightarrow R(-9) \overset {P_f}{\longrightarrow} R(-8)\oplus R(-7)\oplus R(-6) \overset {M_f}{\longrightarrow} R(-4)^3 {\longrightarrow} J_f \longrightarrow 0, \]
with 
\begin{small}
\[ 
M_f=\begin{pmatrix}
 6 x_0^2-5 x_0 x_2 & 12 x_0 x_1^2-5 x_0 x_2^2 & 30 x_1^4-27 x_0 x_1^2 x_2+5 x_0 x_2^3\\
-4 x_0 x_1 & -4 x_0 x_1 x_2 & 8 x_1^3 x_2\\ 
-15 x_1^2-4 x_0 x_2+5 x_2^2 & 6 x_0 x_1^2-27 x_1^2 x_2+2 x_0 x_2^2+5 x_2^3 & 15 x_1^4+18 x_1^2 x_2^2-5 x_2^4 \\
 \end{pmatrix},
\]
\end{small}

\medskip \noindent \medskip and 

\medskip
\begin{center}
$P_f=\begin{pmatrix}
 27 x_1^2 x_2-5 x_2^3\\ 
 -15 x_1^2+5 x_2^2\\ 
 6 x_0 \\
 \end{pmatrix}.
$
\end{center}

\medskip
\noindent Therefore, the exponents of the curve are $(2,3,4)$ and $\deg(J_f)=10$.

\medskip

\item \label{ex_4syz} The rational sextic $C\subset \PP^2$ of equation $C=V(f)=V((x_0^2+x_1^2)^3-4 x_0^2 x_1^2 x_2^2)$ is a $4$-syzygy curve which is singular in $p=(0:0:1)$, $q=(1:-i:0)$ and $r=(1:i:0)$. The Jacobian ideal is given by 
\[
J_f=(6 x_0^5+12 x_0^3 x_1^2+6 x_0 x_1^4-8 x_0 x_1^2 x_2^2,6 x_0^4 x_1+12 x_0^2 x_1^3+6 x_1^5-8 x_0^2 x_1 x_2^2,-8 x_0^2 x_1^2 x_2) )
\]
and it has a minimal free $R$-resolution of the following type:
\[
0 \longrightarrow R(-10)^2 \overset {P_f}{\longrightarrow} R(-9)^3\oplus R(-8) \overset {M_f}{\longrightarrow} R(-5)^3 {\longrightarrow} J_f \longrightarrow 0, 
\]
with 
\begin{small}
\[
M_f= \begin{pmatrix}
 -x_0 x_1^2 & -3 x_0^3 x_1+4 x_0 x_1 x_2^2 & -3 x_1^4& 4 x_0 x_1^2 x_2\\
x_0^2 x_1 & 3 x_0^4 & 3 x_0 x_1^3-4 x_0 x_1 x_2^2& 0\\
-x_0^2 x_2+x_1^2 x_2 & 9 x_0^2 x_1 x_2+3 x_1^3 x_2-4 x_1 x_2^3 & -3 x_0^3 x_2-9 x_0 x_1^2 x_2+4 x_0 x_2^3 & 3 x_0^4+6 x_0^2 x_1^2+3 x_1^4-4 x_1^2 x_2^2\\
 \end{pmatrix},
 \] \end{small}

\medskip \noindent \medskip and 

\begin{center}
$P_f=\begin{pmatrix}
 3 x_1^2-4 x_2^2 & 3 x_0^2\\
 0 & -x_1\\
 -x_0 & 0\\
 -x_2 & x_2\\
 \end{pmatrix}.$
\end{center}

\medskip \noindent \medskip The exponents of the curve are $(3,4,4,4)$ and $\deg(J_f)=16$.

\item \label{ex_5syz} The rational sextic $C\subset \PP^2$ of equation $C=V(f)=V(x_1^6+x_0^2 x_1^2 x_2^2+x_2^6)$ is a $5$-syzygy curve which is singular in $p=(1:0:0)$. We have 
\[
J_f=(2 x_0 x_1^2 x_2^2, 6 x_1^5+2 x_0^2 x_1 x_2^2, 2 x_0^2 x_1^2 x_2+6 x_2^5) 
)
\]
and it has a minimal free $R$-resolution of the following type:
\[
0 \longrightarrow R(-11)^3 \overset {P_f}{\longrightarrow} R(-10)^3\oplus R(-9)^2 \overset {M_f}{\longrightarrow} R(-5)^3 {\longrightarrow} J_f \longrightarrow 0,
\]
with 
\begin{small}
\[M_f= \begin{pmatrix}
 3 x_1^4+x_0^2 x_2^2 & x_0^2 x_1^2+3 x_2^4 & x_0^4 x_1-9 x_1^3 x_2^2 & 0 & x_0^4 x_2-9 x_1^2 x_2^3\\
-x_0 x_1 x_2^2 & 0 & 3 x_0 x_2^4 & -x_0^2 x_1^2 x_2-3 x_2^5 & -x_0^3 x_1 x_2\\
0 & -x_0 x_1^2 x_2 & -x_0^3 x_1 x_2 & 3 x_1^5+x_0^2 x_1 x_2^2 & 3 x_0 x_1^4\\
 \end{pmatrix},
 \] \end{small}

\medskip \noindent \medskip and 

\begin{center}
$P_f=\begin{pmatrix}
 -3 x_2^2 & x_0^2 & 0\\
 x_0^2 & -3 x_1^2 & 0\\
 -x_1 & 0 & x_2\\
 0 & 0 & x_0\\
 0 & -x_2 & -x_1\\
 \end{pmatrix}.
$
\end{center}

\medskip \noindent \medskip The exponents of the curve are $(4,4,5,5,5)$ and $\deg(J_f)=12$.
\end{enumerate}

\end{example}

Consider a $m$-syzygy reduced plane curve $C=V(f)\subset \PP^2$, fix $m$ generators of
the syzygy module 
${\rm Syz}(J_f)$ and denote them by
\[
\rho_1=(A^1_0, A^1_1, A^1_2), \quad \rho_2=(A^2_0,A^2_1,A^2_2), \quad \cdots , \quad \rho_m=(A^m_0,A^m_1,A^m_2),
\]
where $A^1_i \in R_{d_1}$, $A^2_i\in R_{d_2}, \cdots , A^m_i \in R_{d_m}$. Therefore,
a first syzygy matrix $M_f$ of a minimal free $R$-resolution (\ref{eq: m syzygy sequence}) of the Jacobian ideal $J_f$ looks like:
\begin{equation}\label{eq: 3 syzygy first syzygy matrix}
 M_f=\begin{pmatrix}
 A^1_0 & A^2_0 & \cdots & A^m_0\\
 A^1_1 & A^2_1 & \cdots & A^m_1 \\
 A^1_2 & A^2_2 & \cdots & A^m_2 \\
 \end{pmatrix},
\end{equation}
and the second syzygy matrix $P_f$ of a minimal free $R$-resolution of $J_f$ will be indicated as follows:
\begin{equation}\label{eq: 3 syzygy second syzygy matrix}
 P_f=\begin{pmatrix}
 P^1_1 & \cdots & P^{m-2}_1 \\
 P^1_2 & \cdots & P^{m-2}_2\\
 \vdots & \cdots & \vdots\\
 P^1_m & \cdots & P^{m-2}_m\\
 \end{pmatrix},
\end{equation}
where $P_i^j \in R_{1+e_j-d-d_i}$.

 Consider the {\it polar map}
$$
\nabla f : \PP^2 \dasharrow \PP^2, \quad p\to (\partial_0 f(p),\partial_1 f(p),\partial_2 f(p));
$$
we would like to describe the closure $S_f$ of the graph $\Gamma_{\nabla f}$.

We recall that (see, for instance, \cite[Lemma 3.1]{ABMR}),
the class of $S_f={\overline \Gamma}_{\nabla f}$ in the Chow ring $A (\PP^2 \times \PP^2)$ is given by
\begin{equation}\label{eq: class of Sf}
S_f\equiv (\deg \nabla f)\ h_1^2
+ (d-1)h_1 h_2 + h_2^2,
\end{equation}
where $\deg \nabla f= (d-1)^2 - \mu(C)$.

 Moreover, we observe that given a point
$p \in \PP^2 \setminus \Sigma_f$, where $\Sigma_f$ is the Jacobian scheme, and $q\in \PP^2$, if $q=(q_0:q_1:q_2)=\nabla f(p)$ then the following equations are satisfied:
\begin{equation}
\left\{
 \begin{array}{rl}
 A^1_0 (p) q_0 +A^1_1 (p)q_1 + A^1_2 (p)q_2&=0\\
 A^2_0 (p) q_0 +A^2_1 (p)q_1 + A^2_2 (p)q_2&=0\\
 \vdots & \\
 A^m_0 (p) q_0 +A^m_1 (p)q_1 + A^m_2 (p)q_2&=0.\\
 \end{array}
 \right .
\end{equation}
As a consequence the closure
$S_f = \overline {\Gamma}_{\nabla f}\subset \PP^2 \times \PP^2$ is contained in the locus
\begin{equation}\label{eq: equations of Z_f}
\left\{
 \begin{array}{cl}
 A^1_0 y_0 +A^1_1 y_1 + A^1_2 y_2&=0\\
 A^2_0 y_0 +A^2_1 y_1 + A^2_2 y_2&=0\\
 \vdots & \\
 A^m_0 y_0 +A^m_1 y_1 + A^m_2 y_2&=0\\
 \end{array}, 
 \right .
\end{equation}
that is in the intersection of $m$ divisors in $\PP^2\times \PP^2$: 
$$
D_1 \sim d_1 h_1 + h_2, \ D_2 \sim d_2 h_1 + h_2, \ \cdots , D_m \sim d_m h_1 + h_2. 
$$
These $m$ divisors determine a surface $Z_f$ in
$\PP^2 \times \PP^2$. To determine its class we first explicitly compute a minimal free resolution of
the ideal sheaf $\mathcal{I}_{Z_f, \PP^2 \times \PP^2}$.

We now prove that $Z_f$ is a Cohen-Macaulay scheme by finding explicitly a minimal free resolution.
\begin{remark}
 The Cohen-Macaulayness of the coordinate ring of $Z_f$ or, equivalently, of the Symmetric algebra of the Jacobian ideal $J_f$ as well as its minimal free resolution, have been determined in \cite[Theorem 3.2.6,]{B-C}. For the sake of completeness, we underline an alternative approach that is more direct and very explicit.
 \end{remark}

\begin{proposition}\label{lemma: sections of E} 
The surface $Z_f\subset \PP^2\times \PP^2$ is arithmetically Cohen-Macaulay and its ideal
 sheaf $\mathcal{I}_{Z_f, \PP^2 \times \PP^2}$ 
 admits a resolution of the following type:
\begin{equation}\label{eq: sequence defining E}
0\longrightarrow \bigoplus _{j=1}^{m-1}\sO_{\PP^2 \times \PP^2} (-B_j)\longrightarrow \bigoplus _{i=1}^m\sO_{\PP^2 \times \PP^2} (-D_i) \longrightarrow \mathcal{I}_{Z_f, \PP^2 \times \PP^2} \longrightarrow 0,
\end{equation}
where $D_i\sim d_ih_1+h_2$ for $1\le i \le m$, $B_j\sim (1+e_j-d) h_1+h_2$ for $1\le j\le m-2$ and $B_{m-1}\sim (d-1)h_1+2h_2$. 
\end{proposition}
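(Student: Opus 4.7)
The plan is to exhibit an explicit length-two free resolution of $\mathcal{I}_{Z_f, \PP^2 \times \PP^2}$ by constructing generators of the first syzygy module of the $m$ defining equations $F_i := A^i_0 y_0 + A^i_1 y_1 + A^i_2 y_2$ in two families, and then to deduce both exactness and the arithmetic Cohen--Macaulayness of $Z_f$ from Theorem \ref{CM}.

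For the first family, consisting of $m-2$ syzygies, I would transfer each column $(P^j_1, \dots, P^j_m)^{\mathsf T}$ of the second syzygy matrix $P_f$ of $J_f$. Since $M_f P_f = 0$ is equivalent to $\sum_i P^j_i A^i_k = 0$ for every $k \in \{0,1,2\}$, one obtains
\[
\sum_i P^j_i F_i \;=\; \sum_k y_k \Bigl(\sum_i P^j_i A^i_k\Bigr) \;=\; 0.
\]
The entry $P^j_i$ has bidegree $(1+e_j - d - d_i, 0)$, so this syzygy corresponds to the summand $\mathcal{O}(-B_j)$ with $B_j \sim (1+e_j - d)h_1 + h_2$, as required.

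For the remaining summand $\mathcal{O}(-B_{m-1})$ with $B_{m-1} \sim (d-1)h_1 + 2h_2$, I would construct the syzygy via the Koszul-type antisymmetric matrix
\[
T \;=\; \begin{pmatrix} 0 & \partial_2 f & -\partial_1 f \\ -\partial_2 f & 0 & \partial_0 f \\ \partial_1 f & -\partial_0 f & 0 \end{pmatrix}.
\]
Each row is trivially a syzygy of $(\partial_0 f, \partial_1 f, \partial_2 f)$, so it lies in $\mathrm{Syz}(J_f) = \mathrm{im}(M_f)$. Writing the $l$-th row as $\sum_i \Gamma_{l,i}\, \rho_i$ with $\Gamma_{l,i} \in R_{d-1-d_i}$ and setting $c_i := \sum_l \Gamma_{l,i}\, y_l$, one checks
\[
\sum_i c_i F_i \;=\; \sum_{l,k} y_l y_k\, T_{l,k} \;=\; 0
\]
by antisymmetry of $T$. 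Since $c_i$ has bidegree $(d-1-d_i, 1)$, this syzygy fits in bidegree $(d-1, 2)$, matching $B_{m-1}$.

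To conclude, I would verify that the two contributions assemble into a complex of the form \eqref{eq: sequence defining E}, and that the Chern-class balance $\sum_{j=1}^{m-1} B_j = \sum_{i=1}^m D_i$ holds (a direct consequence of \eqref{def e_j}), as expected for an ideal sheaf. Exactness is then obtained by invoking Theorem \ref{CM}: the Jacobian ideal $J_f$ is an almost complete intersection of height two and $R/J_f$ is one-dimensional, so the depth hypothesis is automatic and $\mathrm{Sym}(J_f)$ is Cohen--Macaulay. Since the coordinate ring of $Z_f$ is a quotient of $\mathrm{Sym}(J_f)$ of the expected dimension, the Auslander--Buchsbaum formula forces the length-two complex constructed above to be the minimal free resolution of $\mathcal{I}_{Z_f}$, and the arithmetic Cohen--Macaulayness of $Z_f$ follows. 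The hard part will be precisely this last step: verifying that the two explicit families of syzygies generate the full first syzygy module and that no higher syzygies appear, for which Theorem \ref{CM} is the essential input.
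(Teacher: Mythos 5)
Your construction is the same as the paper's: the first $m-2$ syzygies transferred from $P_f$ via $M_f\cdot P_f=0$, and the last one obtained by expressing the Koszul relations in terms of the $\rho_i$ and contracting the resulting skew-symmetric identity with $(y_0,y_1,y_2)$, exactly as the paper does with its matrix $N$ and column $\mathcal{P}=N\cdot{}^t(y_0,y_1,y_2)$; likewise, the Cohen--Macaulayness input is Theorem \ref{CM} applied to ${\rm Sym}(J_f)$ (note, by the way, that the bigraded coordinate ring of $Z_f$ \emph{is} ${\rm Sym}(J_f)$, not merely a quotient of it of the expected dimension -- this identification is what makes Theorem \ref{CM} applicable at all). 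The genuine gap is in your last step. Cohen--Macaulayness of ${\rm Sym}(J_f)$ gives that $\mathcal{I}_{Z_f}$ has \emph{some} length-two resolution, i.e.\ that the syzygy module of the $m$ forms $F_i=A^i_0y_0+A^i_1y_1+A^i_2y_2$ is free of rank $m-1$ with some twists $B_j'$; it does not say that the $m-1$ syzygies you wrote down generate that module, nor that $B_j'=B_j$. Auslander--Buchsbaum cannot ``force'' this: a priori your complex maps onto a full-rank proper submodule of the syzygy module, and the resulting sequence \eqref{eq: sequence defining E} would fail to be exact in the middle. The balance $\sum_j B_j=\sum_i D_i$ is necessary but does not pin down the individual twists, so it does not close the gap either.

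The paper closes precisely this point with the Hilbert--Burch theorem: since $Z_f$ is arithmetically Cohen--Macaulay of codimension two and the $F_i$ are minimal generators, the ideal is generated by the maximal minors of an $m\times(m-1)$ Hilbert--Burch matrix, and the lemma following the proposition verifies that the assembled matrix $S=(P_f\ \ \mathcal{P})$ of \eqref{eq: definition of matrix S} is that matrix, by checking that its order $m-1$ minors are proportional to the entries of ${}^tM_f\cdot{}^t(y_0,y_1,y_2)$ (via the identity $S\cdot{}^tM_f\cdot{}^t(y_0,y_1,y_2)=0$, plus the degree count showing the proportionality factors are constants). Only after this identification do the columns of $S$ generate the whole syzygy module and the twists $B_j$ in \eqref{eq: sequence defining E} follow. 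To repair your argument you must add this minors computation, or an equivalent one (for instance, show directly that your map of free modules is injective and compare bigraded Hilbert series of its cokernel with that of ${\rm Sym}(J_f)$, which amounts to the same determinantal check); merely invoking Theorem \ref{CM} at this stage, as you do, is not sufficient.
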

\begin{proof} 
Since the coordinate ring of $Z_f$ is the symmetric algebra $S(J_f)=\oplus _{n\ge 0}Sym^nJ_f$ of the Jacobian ideal $J_f$ of the plane curve $C$, applying Theorem \ref{CM} we deduce that the surface $Z_f\subset \PP^2\times \PP^2$ is arithmetically Cohen-Macaulay. Therefore, according to the Hilbert-Burch theorem, its $m$ minimal generators $D_i\sim d_i \ h_1+ h_2$ are defined by the maximal minors of a $m \times (m-1)$ matrix $S$ which we will now describe.

To this end, let us compute a minimal free resolution of $\mathcal{I}_{Z_f, \PP^2 \times \PP^2}$ which is equivalent to explicitly determine $m-1$ linearly independent syzygies of the divisors $D_1,\cdots, D_m$.

 Observe that the exactness of the sequence 
\eqref{eq: m syzygy sequence} implies that
\begin{equation}\label{eq: first section of E}
 M_f \cdot P_f = 0.
\end{equation}

As a consequence, for any integer $1\le j \le m-2$, we also have:
\begin{equation}\label{eq: syzygy in p^2xp^2}
P_1^j (A_0^1 y_0 +A_1^1 y_1 + A_2^1 y_2)
+P_2^j(A_0^2 y_0+A_1^2 y_1+ A_2^2 y_2)+ \cdots +
P_m^j(A_0^m y_0 +A_1^m y_1 + A_2^m y_2) \equiv 0.
\end{equation}
This gives us the first $m-2$ syzygies of $\mathcal{I}_{Z_f, \PP^2 \times \PP^2}$.

We are now looking for the remaining syzygy. To this end, we first observe that we can write the defining equations \eqref{eq: equations of Z_f} in the form
 \begin{equation}
 \begin{pmatrix}
 y_0 & y_1 & y_2\\
 \end{pmatrix}
 \cdot M_f =
 \begin{pmatrix}
 0 & 0 & \cdots & 0\\
 \end{pmatrix}.
 \end{equation}

 Moreover, since the columns $\rho_i:=\begin{pmatrix}
 A^i_0 \\ A^i _1 \\ A^i_2
 \end{pmatrix}$ of $M_f$, $1\le i \le m$, are a minimal system of generators of
 ${\rm Syz}(J_f)$, we can express the degree $d-1$ Koszul syzygies of $J_f$
 \[
 (0, -\partial_2 f,\partial_1 f), \quad (\partial_2 f, 0, -\partial_0 f),
 \quad (-\partial_1 f, \partial_0 f,0)
 \]
 as linear combinations of the following type:
 
 \[
 \begin{matrix}
 (0, -\partial_2 f,\partial_1 f)&=&n_{10}\rho_1 +n_{20}\rho_2+\cdots +n_{m0}\rho_m, \\
 (\partial_2 f, 0, -\partial_0 f)&=&n_{11}\rho_1 +n_{21}\rho_2+\cdots +n_{m1}\rho_m,\\
 (-\partial_1 f, \partial_0 f,0)&=&n_{12}\rho_1 +n_{22}\rho_2+\cdots +n_{m2}\rho_m\\
 \end{matrix}
 \]
 for suitable homogeneous polynomials $n_{ij}=n_{ij}(x_0,x_1,x_2)$ with $1\le i \le m$ and $0\le j \le 2$.

 In particular, by setting $N = (n_{ij})$, where $1\le i\le m$, and $0\le j\le 2$, we have
 \begin{equation}\label{eq: M.N}
 M_f \cdot N =
 \left(
 \begin{array}{rrr}
 0& \partial_2 f & -\partial_1 f \\
 -\partial_2 f & 0 & \partial_0 f \\
 \partial_1 f & - \partial_0 f &0 \\
 \end{array}
 \right),
 \end{equation}
 which is a skew-symmetric matrix. As a consequence, we have for any 
 $(y_0:y_1:y_2) \in \PP^2$
 \begin{equation}\label{eq: second section}
 \begin{pmatrix}
 y_0 & y_1 & y_2\\
 \end{pmatrix}
 \cdot M_f \cdot N \cdot \begin{pmatrix}
 y_0 \\ y_1 \\ y_2\\
 \end{pmatrix}=0.
 \end{equation}
This implies that the entries of the column matrix 
\[
\mathcal{P}=N \cdot \begin{pmatrix}
 y_0 \\ y_1 \\ y_2\\
 \end{pmatrix}
\]
give a non-zero syzygy between the $m$ forms $\begin{pmatrix}
 y_0 & y_1 & y_2\\
 \end{pmatrix}
 \cdot M_f$, and the class of the relation \eqref{eq: second section},
 as a divisor in $\PP^2 \times \PP^2$, is $(d-1)h_1+2h_2$.

Putting it all together, we get a map
 \begin{equation}
 \bigoplus _{j=1}^{m-1}
 \sO_{\PP^2 \times \PP^2}(-B_j) \overset {S}{\longrightarrow}
 \bigoplus_{i=1}^m \sO_{\PP^2 \times \PP^2} (-D_i),
 \end{equation}
 where $D_i=d_ih_1+h_2$ with $1\le i \le m$, $B_j=(1+e_j-d) h_1+h_2$ for $1\le j\le m-2$ and $B_{m-1}=(d-1)h_1+2h_2$. Therefore the matrix $S$ can be written as 

 \begin{equation}\label{eq: definition of matrix S}
 S=
 \begin{pmatrix}
 P_f & \mathcal{P}\\
 \end{pmatrix}.
 \end{equation}
\end{proof} 
\begin{lemma}
 The matrix $S$ of \eqref{eq: definition of matrix S} is the Hilbert-Burch matrix of $Z_f$.
\end{lemma}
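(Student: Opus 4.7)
The plan is to leverage Proposition \ref{lemma: sections of E}, which already establishes that the minimal free resolution of $\mathcal{I}_{Z_f,\PP^2\times\PP^2}$ has the shape \eqref{eq: sequence defining E}. In particular, the module of syzygies among the $m$ generators of $\mathcal{I}_{Z_f}$ is free of rank $m-1$, with minimal generators sitting in the bidegrees $B_1,\ldots,B_{m-1}$. It therefore suffices to check that the $m-1$ columns of $S=(P_f\mid\mathcal{P})$ form such a minimal generating set.

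First I would verify the bidegrees and the syzygy property. The $j$-th column of $P_f$ (for $j\le m-2$) has entries of $x$-degree $1+e_j-d-d_i$ with no $y$-variables, yielding bidegree $B_j$; the column $\mathcal{P}=Ny$ has entries linear in $y$ with $x$-coefficients of degree $d-1-d_i$, yielding bidegree $B_{m-1}$. That the first $m-2$ columns are syzygies follows from $M_fP_f=0$, while $\mathcal{P}$ is a syzygy by $y^TM_f\mathcal{P}=y^T(M_fN)y=0$, using the skew-symmetry of $M_fN$ from \eqref{eq: M.N}. Next, I would split the syzygy module by $y$-degree. The $y$-degree-$0$ part consists of tuples $(s_1,\ldots,s_m)\in R^m$ satisfying $\sum_i s_iA^i_k=0$ for $k=0,1,2$, which is precisely $\ker M_f$; by the minimal resolution \eqref{eq: m syzygy sequence} of $J_f$, this submodule is minimally generated as an $R$-module by the columns of $P_f$, so the $(m-2)\times(m-2)$ upper block of the change-of-basis matrix from $S$ to a minimal generating set is invertible.

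It then remains to show that $\mathcal{P}$ is not contained in the $R[y]$-submodule generated by the columns of $P_f$, equivalently that $\mathcal{P}$ provides the missing minimal generator in bidegree $B_{m-1}$. I would argue by contradiction: if $\mathcal{P}=\sum_k y_k\sigma_k$ with each $\sigma_k\in\ker M_f$, then comparing with $\mathcal{P}=Ny=\sum_k y_kN^{(k)}$ and using the linear independence of $y_0,y_1,y_2$ over $R$ would force $\sigma_k=N^{(k)}$, hence $M_fN^{(k)}=0$ for every $k$, contradicting the fact that the $k$-th column of $M_fN$ is a nonzero Koszul vector by \eqref{eq: M.N}. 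Consequently both diagonal blocks (of sizes $(m-2)\times(m-2)$ and $1\times 1$) of the block-triangular change-of-basis matrix from $S$ to any minimal generating set of the syzygy module are invertible, so the columns of $S$ themselves form a minimal set of syzygies. Since by Proposition \ref{lemma: sections of E} the generators of $\mathcal{I}_{Z_f}$ are recovered (up to sign) as the maximal minors of any such matrix, $S$ is a Hilbert-Burch matrix of $Z_f$. The main obstacle is precisely this last $y$-degree-$1$ independence check: the bigrading reduces the question to a single non-vanishing statement, and the explicit Koszul origin of $\mathcal{P}$, combined with the algebraic independence of the $y_i$, makes the contradiction go through cleanly.
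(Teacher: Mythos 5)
Your overall strategy --- proving that the columns of $S=(P_f\mid\mathcal{P})$ form a \emph{minimal generating set} of the syzygy module of the $m$ bilinear generators of $\mathcal{I}_{Z_f}$ and then invoking Hilbert--Burch --- is genuinely different from the paper's, which never argues via minimal generation: the paper assembles $M_fP_f=0$ and $M_fN=K$ into the single identity \eqref{eq: relations P N M} and deduces directly, Cramer-style, that the order $m-1$ minors of $S$ are proportional to the $m$ bilinear generators themselves, with no prior knowledge of the twists in the resolution. Several of your steps are correct and well executed: the bidegree bookkeeping, the identification of the $y$-degree-zero part of the syzygy module with $\ker M_f$ (hence minimally generated over $R$ by the columns of $P_f$, by \eqref{eq: m syzygy sequence}), and the contradiction showing $\mathcal{P}\notin\langle P_f\rangle_{R[y]}$ via the linear independence of the $y_k$ and $M_fN=K\neq 0$.

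There is, however, a genuine gap where you pass from ``$\mathcal{P}$ is not in the submodule generated by the columns of $P_f$'' to ``the columns of $S$ minimally generate.'' That implication needs the extra fact that the one remaining minimal generator of the (free, rank $m-1$) syzygy module sits in bidegree exactly $B_{m-1}=(d-1)h_1+2h_2$: only then is the coefficient $c$ in $\mathcal{P}=c\,\sigma_{m-1}+\sum_j a_j\sigma_j$ a scalar, so that $c\neq 0$ forces $c$ to be a unit. You take this degree datum from Proposition \ref{lemma: sections of E}, but within the paper that proposition's resolution --- in particular the twist $B_{m-1}$ --- is substantiated only by the construction of $S$ together with this very lemma (the proposition's proof supplies ACM-ness and then merely ``describes'' $S$), so quoting it here is circular; and if the missing generator sat in strictly lower bidegree, non-membership in $\langle P_f\rangle$ alone would not give generation. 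The gap is reparable inside your framework: ACM-ness (Theorem \ref{CM}), Hilbert--Burch and $\codim \mathcal{I}_{Z_f}=2$ force $\sum_j B_j=\sum_i D_i$; your own computation of the $y$-degree-zero part pins down the $m-2$ twists $B_1,\dots,B_{m-2}$, and then the $h_2$-component of the equality gives $y$-degree $2$ for the last twist while the $h_1$-component, combined with the relation displayed after \eqref{eq: m syzygy sequence}, gives $x$-degree $d-1$. With that independent degree argument inserted, your proof closes; alternatively one can bypass the resolution data entirely, as the paper does, by verifying the proportionality of the maximal minors directly.
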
 
\begin{proof} We have to verify that
the order $m-1$ minors of the matrix $S$ are proportional, by a nonzero constant, to the $m$ forms $\rho_1 \cdot \begin{pmatrix}
 y_0\\ y_1\\ y_2\\
 \end{pmatrix}$, $\rho_2 \cdot \begin{pmatrix}
 y_0\\ y_1\\ y_2\\
 \end{pmatrix}$, $\cdots $, 
 $\rho_m \cdot \begin{pmatrix}
 y_0\\ y_1\\ y_2\\
 \end{pmatrix}$.
 Indeed, by \eqref{eq: first section of E} we have $M_f \cdot P_f=\begin{pmatrix}
 0\\ 0\\ 0\\
 \end{pmatrix}$, and by \eqref{eq: M.N} it is $M_f \cdot N= K$, where $K$ is the matrix of Koszul relations:
 \[
 K=\left(
 \begin{array}{rrr}
 0& \partial_2 f & -\partial_1 f \\
 -\partial_2 f & 0 & \partial_0 f \\
 \partial_1 f & - \partial_0 f &0 \\
 \end{array}
 \right).
 \]
 This gives
 \begin{equation}\label{eq: relations P N M}
 \left(
 \begin{array}{c}
 {}^t P_f\\
 {}^t N\\
 \end{array}
 \right) \cdot {}^t M_f \cdot \begin{pmatrix}
 y_0\\ y_1\\ y_2\\
 \end{pmatrix}=
 \left(
 \begin{array}{rrr}
 0&0&0\\
 0& -\partial_2 f & \partial_1 f \\
 \partial_2 f & 0 & -\partial_0 f \\
 -\partial_1 f & \partial_0 f &0 \\
 \end{array}
 \right)\cdot \begin{pmatrix}
 y_0\\ y_1\\ y_2\\
 \end{pmatrix}. 
 \end{equation}
 Observe now that 
 \[
 S= \begin{pmatrix}
 1&0&0&0\\
 0&y_0 &y_1 &y_2\\
 \end{pmatrix}\cdot \left(
 \begin{array}{c}
 {}^t P_f\\
 {}^t N\\
 \end{array}
 \right),
 \]
 and furthermore, we have
 \[
 \begin{pmatrix}
 1&0&0&0\\
 0&y_0 &y_1 &y_2\\
 \end{pmatrix}\cdot \left(
 \begin{array}{rrr}
 0&0&0\\
 0& -\partial_2 f & \partial_1 f \\
 \partial_2 f & 0 & -\partial_0 f \\
 -\partial_1 f & \partial_0 f &0 \\
 \end{array}
 \right)\cdot \begin{pmatrix}
 y_0\\ y_1\\ y_2\\
 \end{pmatrix}
 = \begin{pmatrix}
 0\\0\\
 \end{pmatrix}.
 \]
 Therefore, by left multiplying $\begin{pmatrix}
 1&0&0&0\\
 0&y_0 &y_1 &y_2\\
 \end{pmatrix}$ 
 to both members of \eqref{eq: relations P N M}, we obtain
 \[
 S \cdot {}^t M_f \cdot \begin{pmatrix}
 y_0\\ y_1\\ y_2\\
 \end{pmatrix}=\begin{pmatrix}
 0\\0\\
 \end{pmatrix},
 \]
 so the order $m-1$ minors of $S$ are indeed proportional to the $m$ entries of ${}^t M_f \cdot \begin{pmatrix}
 y_0\\ y_1\\ y_2\\
 \end{pmatrix}$.
 \end{proof}

 We are now ready to determine the class of $Z_f$. In fact, we have:

 \begin{lemma} \label{lemma: class of Z_f}
 Keeping the above notation, we have
 \begin{equation}\label{eq: class of ci}
 Z_f \equiv ((d-1)^2 -\tau(C) )\ h_1^2 +(d-1) \ h_1 \ h_2 + h_2^2. 
 \end{equation}
 
\end{lemma}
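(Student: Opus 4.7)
The plan is to derive the class of $Z_f$ directly from the Hilbert--Burch resolution produced in Proposition \ref{lemma: sections of E}. Concatenating
\[
0\to \bigoplus_{j=1}^{m-1}\sO_{\PP^2\times\PP^2}(-B_j)\to \bigoplus_{i=1}^m\sO_{\PP^2\times\PP^2}(-D_i)\to \mathcal{I}_{Z_f,\PP^2\times\PP^2}\to 0
\]
with $0\to\mathcal{I}_{Z_f}\to\sO_{\PP^2\times\PP^2}\to\sO_{Z_f}\to 0$ yields a length-$3$ locally free resolution of $\sO_{Z_f}$, so its Chern character is
\[
\mathrm{ch}(\sO_{Z_f})=1-\sum_{i=1}^{m}e^{-D_i}+\sum_{j=1}^{m-1}e^{-B_j}\in A(\PP^2\times\PP^2)_{\QQ}.
\]
Because $Z_f$ has pure codimension $2$ inside a smooth fourfold, the components $\mathrm{ch}_0$ and $\mathrm{ch}_1$ must vanish, while $\mathrm{ch}_2(\sO_{Z_f})=[Z_f]$. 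Expanding the exponentials to second order and using $h_1^3=h_2^3=0$ in $A(\PP^2\times\PP^2)$, the class reduces to
\[
[Z_f]=\tfrac12\Bigl(\sum_{j=1}^{m-1}B_j^{\,2}-\sum_{i=1}^{m}D_i^{\,2}\Bigr).
\]

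Next I would extract the three coefficients in the basis $h_1^2,h_1h_2,h_2^2$ using the explicit classes $D_i=d_ih_1+h_2$, $B_j=(1+e_j-d)h_1+h_2$ for $j\le m-2$ and $B_{m-1}=(d-1)h_1+2h_2$. The $h_2^2$-coefficient is immediately $1$ since $(m-2)+4-m=2$ and one divides by $2$. The $h_1h_2$-coefficient comes out to $d-1$ after invoking the numerical relation
\[
d-1-\sum_{i=1}^m d_i=\sum_{j=1}^{m-2}(d-1-e_j),
\]
which is exactly the Euler characteristic constraint on the resolution (\ref{eq: m syzygy sequence}); this same relation is what makes $\mathrm{ch}_1(\sO_{Z_f})=0$, so the consistency check comes for free.

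The main obstacle is the $h_1^2$-coefficient, where one must show that
\[
\frac12\Bigl((d-1)^2+\sum_{i=1}^m d_i^{\,2}-\sum_{j=1}^{m-2}(d-1-e_j)^2\Bigr)=(d-1)^2-\tau(C).
\]
I would verify this by substituting the expression of $\tau(C)=\deg J_f$ provided by formula (\ref{tau_C}), and converting the symmetric sums $\sum_{i<j}d_id_j$ and $\sum_{i<j}(d-1-e_i)(d-1-e_j)$ into power sums via
\[
\sum_{i<j}d_id_j=\tfrac12\Bigl((\textstyle\sum d_i)^2-\sum d_i^{\,2}\Bigr),\qquad \sum_{i<j}(d-1-e_i)(d-1-e_j)=\tfrac12\Bigl((\textstyle\sum(d-1-e_j))^2-\sum(d-1-e_j)^2\Bigr).
\]
Using once more the relation $\sum_i d_i+\sum_j(d-1-e_j)=d-1$, all cross-terms involving $\sum d_i$ and $\sum(d-1-e_j)$ telescope and one is left precisely with the claimed identity. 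This last algebraic reduction is the only non-bookkeeping step, and it completes the computation of $[Z_f]$.
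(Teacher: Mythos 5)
Your route is essentially the paper's: both extract the class of $Z_f$ from the resolution of Proposition \ref{lemma: sections of E} by a characteristic-class computation (the paper multiplies the Chern polynomials of the terms in the ideal-sheaf resolution and reads off the $t^2$ coefficient, you take the Chern character of $\sO_{Z_f}$ from the concatenated resolution), and both reduce to the same numerical identities coming from \eqref{eq: m syzygy sequence} and \eqref{tau_C}. The one genuine problem is a sign slip in your key display for the $h_1^2$-coefficient: from $[Z_f]=\tfrac12\bigl(\sum_j B_j^{\,2}-\sum_i D_i^{\,2}\bigr)$ the coefficient of $h_1^2$ is
\[
\tfrac12\Bigl((d-1)^2+\sum_{j=1}^{m-2}(d-1-e_j)^2-\sum_{i=1}^{m}d_i^{\,2}\Bigr),
\]
not $\tfrac12\bigl((d-1)^2+\sum_i d_i^{\,2}-\sum_j(d-1-e_j)^2\bigr)$, and the identity as you state it is false in general: for the $3$-syzygy quintic of Example \ref{ex_msyz}, with $d=5$, exponents $(2,3,4)$ and $e_1=9$, your left-hand side equals $10$ while $(d-1)^2-\tau(C)=6$ (the corrected expression does give $6$). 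With the sign fixed, the reduction you describe --- substitute \eqref{tau_C}, rewrite the symmetric sums as power sums, and use $\sum_i d_i+\sum_j(d-1-e_j)=d-1$ --- does telescope to $(d-1)^2-\tau(C)$, and your $h_1h_2$ and $h_2^2$ computations are correct, so the argument is complete once that transcription error is repaired.
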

\begin{proof} 
 Since $Z_f$ is an arithmetically Cohen-Macaulay codimension two subscheme of $\PP^2\times \PP^2$, it is of pure dimension $2$, so its class can be written in the form
 $$
 Z_f \equiv \alpha \ h_1^2 +\beta \ h_1 \ h_2 + \gamma \ h_2^2,
 $$ 
 for some coefficients $\alpha, \beta, \gamma \in \ZZ$. Using the exact sequence
$$ 
0\longrightarrow \bigoplus _{j=1}^{m-1}\sO_{\PP^2 \times \PP^2} (-B_j) \longrightarrow \bigoplus _{i=1}^m\sO_{\PP^2 \times \PP^2} (-D_i) \longrightarrow \mathcal{I}_{Z_f, \PP^2 \times \PP^2} \longrightarrow 0,
$$
where $D_i=d_ih_1+h_2$ for $1\le i \le m$, $B_j=(1+e_j-d) h_1+h_2$ for $1\le j\le m-2$ and $B_{m-1}=(d-1)h_1+2h_2$, and the multiplicative character of the Chern polynomial we get:
$$
c_t(\mathcal{I}_{Z_f, \PP^2 \times \PP^2})\cdot c_t(\oplus _{j=1}^{m-1}\sO_{\PP^2 \times \PP^2} (-B_j))=c_t(\oplus _{i=1}^m\sO_{\PP^2 \times \PP^2} (-D_i)).
$$

\medskip \noindent \medskip Looking at the coefficient of $t^2$ we obtain:
\[
\alpha \ h_1^2 +\beta h_1 \ h_2 + \gamma h_2^2+ ((d-1)\sum_{j=1}^{m-2}(1+e_j-d)+\sum_{1\leq i<j\leq m-2}(1+e_i-d)(1+e_j-d)) \ h_1^2+
\]
\[((m-1)\sum_{j=1}^{m-2}(1+e_j-d) +(m-2)(d-1)) \ h_1 \ h_2+\left({m-2 \choose 2}+2(m-2)\right) \ h_2^2=
\]
\[
\sum_{1\le i<j\le m}d_id_jh_1^2+(m-1)(d_1+\cdots +d_m)h_1h_2+{m\choose 2}h_2^2
\]
 or, equivalently,
 \begin{center} 
 $\displaystyle
\alpha +(d-1)\sum_{j=1}^{m-2}(1+e_j-d)+\sum_{1\leq i<j\leq m-2}(1+e_i-d)(1+e_j-d)=\sum_{1\leq i<j\leq m}d_id_j,$

$\displaystyle\beta +(m-1)\sum_{j=1}^{m-2}(1+e_j-d) +(m-2)(d-1) =(m-1)\sum_{i=1}^{m}d_i,$

$\displaystyle\gamma +{m-2 \choose 2}+2(m-2) ={m\choose 2}.$

\end{center}

Therefore, by \eqref{eq: m syzygy sequence}, \eqref{def e_j} and \eqref{def epsilon_j} we obtain:
$$\alpha =(d-1)^2-\tau(C), \ \beta = d-1 \ \text{ and } \ \gamma =1.$$

 \end{proof}

 \begin{theorem}\label{thm: Z_f irreducible}
 Let $C=V(f)$ be a reduced plane curve. Then 
\[
\mu(C)=\tau(C) 
\iff Z_f \text{\ is \ irreducible.}
\]
Moreover, if $Z_f$ is reducible, then $S_f \subsetneq Z_f$ and
\begin{equation}\label{eq: class of Z_f in terms of S_f}
Z_f = S_f + \sum_{i=1}^s m_i p_1^{-1} (P_i),
\end{equation}
where $\{P_1, \dots , P_s\} \subseteq {\rm Sing} \ C$,
the map $p_1: \PP^2 \times \PP^2\longrightarrow \PP^2$ denotes the first projection
and the integers $m_i \ge 1$ satisfy $\sum_{i=1}^s m_i= \mu (C) - \tau (C)$.

 \end{theorem}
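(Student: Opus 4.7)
The driving identity is the class comparison: by Lemma \ref{lemma: class of Z_f} and equation \eqref{eq: class of Sf} I have
\[
[Z_f] - [S_f] = (\mu(C) - \tau(C))\, h_1^2
\]
in $A(\PP^2 \times \PP^2)$, and $\mu(C) \ge \tau(C)$ always. My plan is to promote this numerical identity into an equality of effective cycles $Z_f = S_f + \sum m_i\, p_1^{-1}(P_i)$, supported on fibers over singular points, and then to read off both assertions of the theorem.

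First, I would establish $S_f \subseteq Z_f$: for any $p \in \PP^2 \setminus \Sigma_f$ the point $(p,\nabla f(p))$ satisfies the defining equations \eqref{eq: equations of Z_f}, and since $Z_f$ is closed the inclusion follows by taking closures. Next I would analyze the fibers of $p_1 \colon Z_f \to \PP^2$. For $p \notin \Sigma_f$ the Koszul relations $(0,-\partial_2 f,\partial_1 f)$, $(\partial_2 f,0,-\partial_0 f)$, $(-\partial_1 f,\partial_0 f,0)$ lie in $\mathrm{Syz}(J_f)$ and, evaluated at $p$, span the entire orthogonal complement of $\nabla f(p) \ne 0$ in $\CC^3$; hence $\mathrm{rk}\, M_f(p) = 2$, so the fiber of $Z_f$ over $p$ is the single point $\nabla f(p)$, and thus $Z_f$ and $S_f$ coincide over $\PP^2 \setminus \Sigma_f$. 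Over a singular point $P \in \Sigma_f$, the fiber of $Z_f$ is the linear subspace of $\PP^2$ cut by ${}^tM_f(P)\, y = 0$, of codimension $\mathrm{rk}\, M_f(P)$; by Corollary \ref{corT1}, this fiber equals the whole $\PP^2$ precisely when $P$ is non-quasi-homogeneous, and otherwise is a proper linear subspace.

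By Proposition \ref{lemma: sections of E}, $Z_f$ is arithmetically Cohen-Macaulay, hence equidimensional of pure dimension $2$ with no embedded components. Its $2$-dimensional components distinct from $S_f$ must therefore be fibers $p_1^{-1}(P_i)$ over the non-quasi-homogeneous singular points $P_i \in \mathrm{Sing}\, C$, giving a cycle equality
\[
Z_f = S_f + \sum_{i=1}^{s} m_i\, p_1^{-1}(P_i)
\]
with $m_i \ge 1$. Since $[p_1^{-1}(P_i)] = h_1^2$, comparing classes yields $\sum_i m_i = \mu(C) - \tau(C)$, and the equivalence is now immediate: $\mu(C) = \tau(C)$ iff $\sum m_i = 0$ iff no extra fiber components appear iff $Z_f = S_f$, which is irreducible as the closure of the graph of a rational map from the irreducible variety $\PP^2$. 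The main subtlety I expect lies in passing from the set-theoretic fiber analysis to this cycle equality: the Cohen-Macaulay property of $Z_f$ is crucial to rule out embedded or lower-dimensional components that would otherwise distort the class count, while positivity $m_i \ge 1$ over each non-QH point is automatic, since the whole fiber $\PP^2$ sits set-theoretically inside $Z_f$ there.
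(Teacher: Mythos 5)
Your argument is correct and is essentially the paper's own proof: you compare the class of $Z_f$ from Lemma \ref{lemma: class of Z_f} with that of $S_f$ in \eqref{eq: class of Sf}, use that the two surfaces coincide over $\PP^2\setminus\Sigma_f$ (your Koszul-syzygy rank computation just makes explicit what the paper calls ``by construction'') together with the purity and absence of embedded components coming from Proposition \ref{lemma: sections of E}, so that any extra two-dimensional components are vertical planes $p_1^{-1}(P_i)$ over points of ${\rm Sing}\ C$, and then read off $\sum_i m_i=\mu(C)-\tau(C)$ and the equivalence. One small remark: your appeal to Corollary \ref{corT1} to single out the non-quasi-homogeneous points is not needed for the statement (which only asserts $P_i\in {\rm Sing}\ C$) and is better omitted, since the purpose of Section 4 is precisely to reprove that criterion independently from this theorem.
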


 \begin{proof}
 The proof follows the lines of the proof of \cite[Theorem 3.3 and Theorem 5.6]{ABMR}.
 The first statement follows by observing that
$$
\mu(C)=\tau(C) 
 \iff \deg \nabla f = (d-1)^2 - \tau(C) \iff S_f=Z_f \iff Z_f \text{\ is \ irreducible.}
$$
 
 By comparing the classes \eqref{eq: class of ci} and \eqref{eq: class of Sf}, we see that the only possible irreducible components of $Z_f$ different from $S_f$ are cycles of class $m_i h_1^2$, for suitable coefficients $m_i \ge 1$. Since over $\PP^2 \setminus \Sigma_f$, the surfaces $S_f$ and $Z_f$ coincide by construction, we have $Z_f = S_f \cup \bigcup_{i=1}^s p_1^\star P_i$,
with $\{P_1, \cdots , P_s\} \subseteq {\rm Sing} \ C$.
 \end{proof}

 Putting all together we get the following efficient characterization of quasi-homogeneous isolated singularities of reduced plane curves in terms of the first syzygy matrix associated with the Jacobian ideal. 
 \begin{theorem} \label{main2}
 Let $C=V(f)$ be a reduced plane curve and $p\in Sing(C)$. Then, 
$$
p \text{\ is \ a \ quasi-homogeneous \ singularity } \iff \text {rk} \ M_f(p) \ge 1.
$$
 Therefore, it holds:
 $$
\mu(C)=\tau(C) 
\iff \text {rk} \ M_f(q) \ge 1, \forall \ q \in \PP^2.
$$
\end{theorem}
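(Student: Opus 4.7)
The plan is to establish the pointwise equivalence first by invoking Theorem \ref{thm1}, and then to derive the global ``therefore'' statement either pointwise or directly from the geometry of $Z_f$ developed in this section.

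For the pointwise claim at $p\in\mathrm{Sing}(C)$, I would observe that the columns of $M_f$ are exactly the minimal generators $\rho_1,\dots,\rho_m$ of $\mathrm{Syz}(J_f)$, so the condition $\mathrm{rk}\, M_f(p)\ge 1$ is equivalent to asserting that at least one $\rho_j$ satisfies $\rho_j(p)\neq 0$. The $(\Leftarrow)$ implication is then immediate from Theorem \ref{thm1}, since such a column $\rho_j$ is itself a Jacobian syzygy not vanishing at $p$. For the converse, Theorem \ref{thm1} supplies a Jacobian syzygy $\rho$ with $\rho(p)\neq 0$; writing $\rho=\sum_i c_i\rho_i$ as a homogeneous $R$-linear combination of the minimal generators and evaluating at $p$ yields $\rho(p)=\sum_i c_i(p)\rho_i(p)$, which forces at least one $\rho_i(p)\neq 0$.

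For the global statement, I propose the geometric argument. For any $q\in\PP^2$, inspecting \eqref{eq: equations of Z_f} shows that the fiber $p_1^{-1}(q)$ is contained in $Z_f$ exactly when each linear form $\sum_i A^j_i(q)\, y_i$ is identically zero in the $y$-variables, hence exactly when every entry of $M_f(q)$ vanishes, i.e.\ when $\mathrm{rk}\, M_f(q)=0$. Therefore the condition $\mathrm{rk}\, M_f(q)\ge 1$ for every $q\in\PP^2$ is equivalent to $Z_f$ containing no $p_1$-fiber, and by Theorem \ref{thm: Z_f irreducible} this is equivalent to $Z_f=S_f$, hence to $\mu(C)=\tau(C)$. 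A purely pointwise alternative is also available: the pointwise part handles singular $q$, whereas at any $q\notin\mathrm{Sing}(C)$ some $\partial_i f(q)$ is nonzero (by Euler's identity when $q\notin V(f)$), so the Koszul syzygies of degree $d-1$ are nonzero at $q$, and expressing them in terms of the minimal generators forces $\mathrm{rk}\, M_f(q)\ge 1$ automatically.

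I do not anticipate a serious obstacle: the argument is essentially an assembly of Theorem \ref{thm1} and Theorem \ref{thm: Z_f irreducible}. The only mild technical point is the evaluation step $\rho(p)=\sum_i c_i(p)\rho_i(p)$, used to transfer nonvanishing of an arbitrary syzygy at $p$ to nonvanishing of some minimal generator appearing in its presentation.
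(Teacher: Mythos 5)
Your argument is correct, but it is only half the paper's route. For the global equivalence you do exactly what the paper does: you identify the vertical fibers of $Z_f$ with the points where $M_f$ vanishes, and then invoke Theorem \ref{thm: Z_f irreducible} together with the decomposition \eqref{eq: class of Z_f in terms of S_f} to translate ``no fiber $p_1^{-1}(q)$ inside $Z_f$'' into $Z_f=S_f$, i.e.\ $\mu(C)=\tau(C)$. For the pointwise claim, however, the paper deliberately does \emph{not} use Theorem \ref{thm1}: the stated purpose of Section 4 is to give an alternative, geometric proof of the characterization, and the paper's (admittedly terse) proof of Theorem \ref{main2} extracts everything from the comparison of the classes of $S_f$ and $Z_f$. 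Your pointwise step instead re-derives Corollary \ref{corT1} from Theorem \ref{thm1}: the columns of $M_f$ are the minimal generators $\rho_1,\dots,\rho_m$, one direction is immediate, and the evaluation $\rho(p)=\sum_i c_i(p)\rho_i(p)$ transfers nonvanishing of an arbitrary syzygy to some generator. This is logically sound, since Theorem \ref{thm1} was proved independently in Section 3 via Saito's criterion, and it even sidesteps a point the paper's geometric proof leaves implicit, namely that the vertical components of $Z_f$ sit precisely over the singular points with $\mu_p>\tau_p$; but it buys this at the cost of making the first half of Theorem \ref{main2} a restatement of Corollary \ref{corT1} rather than the independent geometric argument the section advertises. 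Your closing remark that at a nonsingular point $q$ some Koszul syzygy of degree $d-1$ is nonzero, so that $\mathrm{rk}\, M_f(q)\ge 1$ automatically, is a correct and useful complement (it mirrors the Koszul argument in Proposition \ref{propP}), and it gives a purely pointwise derivation of the global statement that the paper does not spell out.
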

\begin{proof}
By Theorem \ref{thm: Z_f irreducible}, we have $\mu(C)=\tau(C) $
if and only if $Z_f$ is irreducible, and, by \eqref{eq: class of Z_f in terms of S_f},
 checking the irreducibility of $Z_f$ amounts to checking if for some point $p=(p_0:p_1:p_2) \in \Sigma_f$, the equations
 \eqref{eq: equations of Z_f} vanish identically, that is a first syzygy matrix satisfies $M_f(p)=0$.
 \end{proof}

\begin{remark}
    The geometric description given in Theorem \ref{thm: Z_f irreducible} implies, in particular, that the supports of the zero loci of the ideals $I_f$, spanned by the entries of $M_f$, and of $I_f + J_f$
    coincide.
\end{remark}
\begin{example} Let us reconsider the curves presented in Examples \ref{ex_msyz} and \ref{ex_QH} and apply our criteria.
\begin{enumerate}
\item The $3$-syzygy rational quintic $C\subset \PP^2$ of equation $C=V(f)=V(x_0^2 x_1^3+3 x_1^5-4 x_0 x_1^3 x_2+x_0^2 x_1 x_2^2)$ has two singularities in $p=(0:0:1)$ and $q=(1:0:0)$. Since 
$ \text {rk} \ M_f(p)=\text {rk} \ M_f(q)=1$, both singularities are quasi-homogeneous.

\item The $4$-syzygy rational sextic $C\subset \PP^2$ of equation $C=V(f)=V((x_0^2+x_1^2)^3-4 x_0^2 x_1^2 x_2^2)$ has three singularities in $p=(0:0:1)$, $q=(1:-i:0)$ and $r=(1:i:0)$. Moreover, we have
$ \text {rk} \ M_f(p)=0$ while $\text {rk} \ M_f(q)=\text {rk} \ M_f(r)=1$, thus $p$ is non-quasi-homogeneous, while $q$ and $r$ are quasi-homogeneous.

\item The $5$-syzygy rational sextic $C\subset \PP^2$ of equation $C=V(f)=V(x_1^6+x_0^2 x_1^2 x_2^2+x_2^6)$ has only one singular point in $p=(1:0:0)$. Since $\text {rk} \ M_f(p)=0$, the singularity is non-quasi-homogeneous.

\item As seen in Example \ref{ex_QH}, \eqref {ex_4syz_QH}, the nodal curve $C=V(f)=V(x_1^2x_2-x_0^2(x_0+x_2)$ has a unique singularity in $p=(0:0:1)$, which is quasi-homogeneous. Indeed, $\text {rk} \ M_f(p)= 1\neq 0$ as expected.

\item The $4$-syzygy rational quintic given by $C=V(f)=V(x_0 x_1^2 x_2^2 +x_1^5 + x_2^5)$, has a unique singularity in $p=(1:0:0)$; we have $\text {rk} \ M_f(p)= 0$ as expected, since the singularity is non-quasi-homogeneous.\\
\item The hypersurface in $V=V(f)\subset \mathbb{P}^n$ with $f=x_0^{a}x_1^{b}+x_2^d+\cdots +x_n^{d}$, with $d=a+b$ and $a, b\geq 2$ has as singular points $p=(1:0:\cdots:0)$ and $q=(0:1:0:\cdots:0)$. Since $V$ has a linear Jacobian syzygy of the type $\rho=(bx_0,-ax_1,0,\cdots,0)$ one has $\rho(p)\neq 0$ and $\rho(q)\neq 0$, and by Theorem \ref{syz} both singularities are quasi-homogeneous.

\item The hypersurface $V=V(f)\subset \mathbb{P}^3$ with $f=x_0^2x_3^3+x_1^4x_3+x_2^5-x_0x_1x_2x_3^2$ is a $9$-syzygy hypersurface with singular points $p=(0:0:0:1)$ and $q=(1:0:0:0)$, and $\rank M_f(p)=\rank M_f(q)=0$ so both singularities are non-quasi-homogeneous. \end{enumerate}

\end{example}

 \begin{example}
 The following examples of curves with non-quasi-homogeneous singularities are taken from \cite[Example 2.6]{NN}.\\
 
 (1) Consider the polynomial $f=x_0^4\,x_2-x_0^2\,x_1^2\,x_2+x_1^5
$. The curve has a singularity in $(0:0:1)$. A first syzygy matrix is given by
\begin{tiny}
 \[
 M_f= \begin{pmatrix}
5x_0x_1^2 - 2x_0x_1x_2 & 10x_0^2x_1 - 5x_1^3 - 2x_0^2x_2 & 5x_0^3 - 2x_0x_1x_2 & 0 \vphantom{5x_0x_1^2} \\
-2x_0^2x_2 & -2x_0x_1x_2 & -2x_0^2x_2 & x_0^4 - x_0^2x_1^2 \vphantom{-2x_0^2x_2} \\
-20x_1^2x_2 + 4x_1x_2^2 & -40x_0x_1x_2 + 8x_0x_2^2 & -20x_0^2x_2 - 10x_1^2x_2 + 4x_1x_2^2 & -5x_1^4 + 2x_0^2x_1x_2 \vphantom{-40x_0x_1x_2}
\end{pmatrix}
 \]
\end{tiny}
and it vanishes in $(0:0:1)$. Therefore, $(0:0:1)$ is not a quasi-homogeneous singularity.

\medskip
 (2) \label{ex: first example}
 Let $f=x_0^5\, x_2^3+x_0^3 \, x_1^5+x_1^7\, x_2$; the curve
 $C=V(f)$ has two singular points $p=(0:0:1)$ and $q=(1:0:0)$, and it is a $5$-syzygy curve. A first syzygy matrix is given by
 \begin{tiny}
 \[
 M_f= \begin{pmatrix}
15x_0^4 + 21x_0x_1^2x_2 & 5x_0x_1^5 - 21x_0^3x_2^3 & 25x_0^2x_1^4 + 147x_0x_1x_2^4 & 45x_0^3x_1^3 + 88x_1^5x_2 - 105x_0^2x_2^4 & -5x_1^6 + 21x_0^2x_1x_2^3 \\
-9x_0^3x_1 + 5x_1^3x_2 & -3x_1^6 - 5x_0^2x_1x_2^3 & -15x_0x_1^5 - 88x_0^3x_2^3 + 35x_1^2x_2^4 & -27x_0^2x_1^4 - 25x_0x_1x_2^4 & -9x_0^4x_2^2 + 5x_0x_1^2x_2^3 \\
-25x_0^3x_2 - 35x_1^2x_2^2 & 21x_1^5x_2 + 35x_0^2x_2^4 & 105x_0x_1^4x_2 - 245x_1x_2^5 & -75x_0^2x_1^3x_2 + 175x_0x_2^5 & 15x_0^2x_1^4 - 35x_0x_1x_2^4
\end{pmatrix}\]
 \end{tiny}
We easily check that the matrix $M_f$ vanishes only in $p=(0:0:1)$. So, the point $p$ is not a quasi-homogeneous singularity while the point $q$ is a quasi-homogeneous singularity.

 \end{example}
 
\begin{remark}
The equations of $S_f$ and its Cohen-Macaulayness when $S_f \neq Z_f$ are, in general, open questions.
 Our construction gives a way to answer such questions, in principle. Indeed, given a specific example, by a computer algebra symbolic system,
 like Macaulay 2, one can compute a primary decomposition of the ideal 
 of $Z_f$ in $\PP^2 \times \PP^2$; the unique prime factor 
 corresponding to a horizontal cycle gives the equations of $S_f$. Then the Cohen-Macaulayness can be checked by computing a minimal free resolution.
\end{remark}

 \begin{example}\label{ex: Ploski curves}
 A relevant family of curves with a non-quasi-homogeneous singular point is given by the {\it P\l oski curves}, that is unions of conics belonging to a hyperosculating pencil, or such unions plus the tangent line in the singularity. Such curves are free, see for instance \cite{D2} and \cite[Example 3.2(2)]{BMR}, they have only one singular point, and one has $\mu (C)=(d-1)^2-\lfloor \frac{d}{2}\rfloor$ and $\tau (C)=(d-1)(d-2)+1$.

 For instance,
 if we consider the degree $6$ P\l oski curve with equation 
 \[
 f=(x_0^2 + (x_0x_2 + x_1^2))(x_0^2 - (x_0x_2 + x_1^2))(x_0^2 + 2 (x_0x_2 + x_1^2)),
 \]
 we have $\mu(C)=22$ and $\tau(C)=21$. The primary decomposition of the ideal of $Z_f$ gives
 \begin{tiny}
 
 \[
 {}\left(x_1,\,x_0\right),
 \]
 \[(x_0 y_1-2 x_1 y_2,6 x_1^3 y_0 y_1-3 x_1^2 x_2 y_1^2-4 x_0^3 y_0 y_2+4 x_0 x_1^2 y_0 y_2+4 x_0^2 x_2 y_0 y_2+24 x_1^2 x_2 y_0 y_2+12 x_0 x_2^2 y_0 y_2-2 x_1^3 y_1 y_2-12 x_1 x_2^2 y_1 y_2+12 x_0^3 y_2^2
 \]
 \[
 \qquad \qquad +16 x_0 x_1^2 y_2^2+20 x_0^2 x_2 y_2^2-12 x_1^2 x_2 y_2^2-8 x_0 x_2^2 y_2^2-12 x_2^3 y_2^2,
 \]
 \[2 x_0^4 y_0-2 x_0^2 x_1^2 y_0-6 x_1^4 y_0-2 x_0^3 x_2 y_0-12 x_0 x_1^2 x_2 y_0-6 x_0^2 x_2^2 y_0+3 x_1^3 x_2 y_1-6 x_0^4 y_2-8 x_0^2 x_1^2 y_2
 \]
 \[
 \qquad \qquad +2 x_1^4 y_2-10 x_0^3 x_2 y_2+6 x_0 x_1^2 x_2 y_2+4 x_0^2 x_2^2 y_2+12 x_1^2 x_2^2 y_2+6 x_0 x_2^3 y_2),
 \]
 \end{tiny}
 
\medskip \noindent 
and we see that there is one vertical reduced component over the point $(0:0:1)$ and another irreducible one, which corresponds to $S_f$. Moreover, one can check that $S_f$ is Cohen-Macaulay in this case.

 On the other hand, if we consider the degree $8$ P\l oski curve with equation 
 \[
 f=(x_0^2 + (x_0x_2 + x_1^2))(x_0^2 - (x_0x_2 + x_1^2))(x_0^2 + 2 (x_0x_2 + x_1^2))(x_0^2 -2 (x_0x_2 + x_1^2)),\]
 we have $\mu(C)=45$ and $\tau(C)=43$. The primary decomposition of the ideal of $Z_f$ gives 
 \begin{tiny}
 \[(x_0 y_1-2 x_1 y_2,x_1^2,x_0 x_1,x_0^2),\]
 \[ (x_0 y_1-2 x_1 y_2,2 x_1^4 y_0 y_1^2-x_1^3 x_2 y_1^3+12 x_1^3 x_2 y_0 y_1 y_2-6 x_1^2 x_2^2 y_1^2 y_2-5 x_0^2 x_1^2 y_0 y_2^2-5 x_0^3 x_2 y_0 y_2^2+24 x_1^2 x_2^2 y_0 y_2^2+\]
 \[+8 x_0 x_2^3 y_0 y_2^2-12 x_1 x_2^3 y_1 y_2^2-4 x_0^4 y_2^3+10 x_1^4 y_2^3+25 x_0 x_1^2 x_2 y_2^3+15 x_0^2 x_2^2 y_2^3-8 x_2^4 y_2^3,\]
 \[4 x_1^5 y_0 y_1-2 x_1^4 x_2 y_1^2-5 x_0^3 x_1^2 y_0 y_2-5 x_0^4 x_2 y_0 y_2+24 x_1^4 x_2 y_0 y_2+24 x_0 x_1^2 x_2^2 y_0 y_2+8 x_0^2 x_2^3 y_0 y_2-12 x_1^3 x_2^2 y_1 y_2-4 x_0^5 y_2^2+\]
 \[+10 x_0 x_1^4 y_2^2+25 x_0^2 x_1^2 x_2 y_2^2+15 x_0^3 x_2^2 y_2^2-24 x_1^2 x_2^3 y_2^2-8 x_0 x_2^4 y_2^2,\]
 \[5 x_0^4 x_1^2 y_0-8 x_1^6 y_0+5 x_0^5 x_2 y_0-24 x_0 x_1^4 x_2 y_0-24 x_0^2 x_1^2 x_2^2 y_0-8 x_0^3 x_2^3 y_0+4 x_1^5 x_2 y_1+4 x_0^6 y_2\]
 \[-10 x_0^2 x_1^4 y_2-25 x_0^3 x_1^2 x_2 y_2-15 x_0^4 x_2^2 y_2+24 x_1^4 x_2^2 y_2+24 x_0 x_1^2 x_2^3 y_2+8 x_0^2 x_2^4 y_2).
\]
 \end{tiny}
 
In particular, we see that the vertical component over the point
$(0:0:1)$ is a non-reduced scheme, namely a double plane with double structure determined by the threefold of equation $x_0\,y_1-2\,x_1\,y_2$.

The equations of $S_f$ are given by the second ideal; it turns out that $S_f$ is not Cohen-Macaulay in this case.
 \end{example}


\section{Final comments}
 Another challenging question concerns the investigation of the geometric meaning of $\mu_p(C)$ and $\tau_p(C)$ in the non-quasi-homogeneous case, in terms of the Jacobian singular scheme $\Sigma_f$.

 A possible approach may involve the so-called 
 {\it Koszul hull} $\mathcal{K} \subset \PP^n \times \PP^n$ introduced in \cite[Definition 3.2.10 and Proposition 3.2.11]{B-C}; the scheme $\mathcal{K}$ is the standard determinantal scheme defined
 by the order two minors of the matrix
 \begin{equation}\label{eq: Koszul hull}
 \begin{pmatrix}
 y_0 & y_1 &\cdots &y_n\\
 \partial_0 f & \partial_1 f & \cdots & \partial_n f\\
 \end{pmatrix}.
 \end{equation}
 Such a scheme contains $Z_f$; being of the right codimension,
 it is Cohen-Macaulay, and a minimal free resolution of its ideal is given by the Eagon-Northcott
complex. 

Let us focus on the planar case $n=2$. It is easily seen that the numerical class
of $\mathbb {K}$ is $(d-1)^2 h_1^2 + (d-1) h_1 h_2 + h_2^2$.
As observed in \cite[Lemma 3.2.14]{B-C}, the surface $Z_f$ is linked to $p_1^\star \Sigma_f \equiv \tau(C) h_1^2$ in $\mathcal{K}$, in the sense that $\mathcal I_{Z_f} : \mathcal I_{\mathcal{K}}\cong p_1^\star \mathcal I_{\Sigma_f}$.

Similarly, one can consider the residual scheme to $S_f$ in $\mathcal{K}$; this is a codimension two cycle with numerical class
$\mu(C)h_1^2$, containing $p_1^\star \Sigma_f$, so it is of the form $p_1^\star {\mathcal V}$ for a suitable zero-dimensional scheme ${\mathcal V}\subset \PP^2$.

For instance, in the case of the example
\[f=(x_0^2 + (x_0x_2 + x_1^2))(x_0^2 - (x_0x_2 + x_1^2))(x_0^2 + 2 (x_0x_2 + x_1^2))
\]
treated in \ref{ex: Ploski curves}, the Jacobian ideal has degree $21$ and it is given by
\begin{tiny}
\[(6\,x_{0}^{5}+8\,x_{0}^{3}x_{1}^{2}-2\,x_{0}x_{1}^{4}+10\,x_{0}^{4}x_{2}-6\,x_{0}^{2}x_{1}^{2}x_{2}-6\,x_{1}^{4}x_{2}-4\,x_{0}^{3}x_{2}^{2}-12\,x_{0}x_{1}^{2}x_{2}^{2}-6\,x_{0}^{2}x_{2}^{3}
\]
\[
\,4\,x_{0}^{4}x_{1}-4\,x_{0}^{2}x_{1}^{3}-12\,x_{1}^{5}-4\,x_{0}^{3}x_{1}x_{2}-24\,x_{0}x_{1}^{3}x_{2}-12\,x_{0}^{2}x_{1}x_{2}^{2}
\]
\[
\,2\,x_{0}^{5}-2\,x_{0}^{3}x_{1}^{2}-6\,x_{0}x_{1}^{4}-2\,x_{0}^{4}x_{2}-12\,x_{0}^{2}x_{1}^{2}x_{2}-6\,x_{0}^{3}x_{2}^{2})
\]
\end{tiny}
while the saturation of the ideal of the degree $22$ scheme ${\mathcal V}$ is given by
\begin{tiny}
\[
(x_{0}^{4}x_{1}-x_{0}^{2}x_{1}^{3}-3\,x_{1}^{5}-x_{0}^{3}x_{1}x_{2}-6\,x_{0}x_{1}^{3}x_{2}-3\,x_{0}^{2}x_{1}x_{2}^{2}
\]
\[
\,3\,x_{0}^{5}+4\,x_{0}^{3}x_{1}^{2}-x_{0}x_{1}^{4}+5\,x_{0}^{4}x_{2}-3\,x_{0}^{2}x_{1}^{2}x_{2}-3\,x_{1}^{4}x_{2}-2\,x_{0}^{3}x_{2}^{2}-6\,x_{0}x_{1}^{2}x_{2}^{2}-3\,x_{0}^{2}x_{2}^{3}
\]
\[\,45\,x_{0}^{2}x_{1}^{4}+63\,x_{1}^{6}+34\,x_{0}^{3}x_{1}^{2}x_{2}+125\,x_{0}x_{1}^{4}x_{2}-19\,x_{0}^{4}x_{2}^{2}+69\,x_{0}^{2}x_{1}^{2}x_{2}^{2}+24\,x_{1}^{4}x_{2}^{2}+7\,x_{0}^{3}x_{2}^{3}+48\,x_{0}x_{1}^{2}x_{2}^{3}+24\,x_{0}^{2}x_{2}^{4})
\]
\end{tiny}
The properties
of such a scheme are mysterious, they could give a deeper insight into non-quasi-homogeneous singularities and deserve further investigation.

\end{document}